\newcommand{\RR}{\mathbb{R}}
\renewcommand{\S}{\mathcal{S}}
\newtheorem{theorem}{Theorem}[section]
\newtheorem{lemma}[theorem]{Lemma}
\newtheorem{corollary}[theorem]{Corollary}
\newtheorem{proposition}[theorem]{Proposition}
\theoremstyle{definition}
\newtheorem{definition}[theorem]{Definition}
\newtheorem{example}[theorem]{Example}
\theoremstyle{remark}
\newtheorem{remark}[theorem]{Remark}
\newcommand{\Hzero}[0]{
\resizebox{0.58cm}{0.35cm}{
\begin{tikzpicture}
\node[fill=black, circle, minimum size=0.3cm] (1) {};
\node[fill=black, circle, minimum size=0.3cm] (2) [below left of=1] {};
\node[fill=black, circle, minimum size=0.3cm] (3) [below right of=1] {};
\end{tikzpicture}}}
\newcommand{\Hone}[0]{
\resizebox{0.58cm}{0.35cm}{
\begin{tikzpicture}
\node[fill=black, circle, minimum size=0.3cm] (1) {};
\node[fill=black, circle, minimum size=0.3cm] (2) [below left of=1] {};
\node[fill=black, circle, minimum size=0.3cm] (3) [below right of=1] {};
\draw (2)--(1);
\end{tikzpicture}}}
\newcommand{\Htwo}[0]{
\resizebox{0.58cm}{0.35cm}{
\begin{tikzpicture}
\node[fill=black, circle, minimum size=0.3cm] (1) {};
\node[fill=black, circle, minimum size=0.3cm] (2) [below left of=1] {};
\node[fill=black, circle, minimum size=0.3cm] (3) [below right of=1] {};
\draw (2)--(1)--(3);
\end{tikzpicture}}}
\newcommand{\Hthree}[0]{
\resizebox{0.58cm}{0.35cm}{
\begin{tikzpicture}
\node[fill=black, circle, minimum size=0.3cm] (1) {};
\node[fill=black, circle, minimum size=0.3cm] (2) [below left of=1] {};
\node[fill=black, circle, minimum size=0.3cm] (3) [below right of=1] {};
\draw (2)--(1)--(3)--(2);
\end{tikzpicture}}}
\newcommand{\labeledcherries}[3]{
\resizebox{0.8cm}{0.4cm}{
\begin{tikzpicture}
\node[fill=black, circle, minimum size=0.3cm, label=right:\Huge{\textbf{#1}}] (1) {};
\node[fill=black, circle, minimum size=0.3cm, label=left:\Huge{\textbf{#2}}] (2) [below left of=1] {};
\node[fill=black, circle, minimum size=0.3cm, label=right:\Huge{\textbf{#3}}] (3) [below right of=1] {};
\draw (2)--(1)--(3);
\end{tikzpicture}}}
\newcommand{\flagnotflag}[4]{
\resizebox{0.8cm}{0.6cm}{
\begin{tikzpicture}
\node[fill=black, circle, minimum size=0.3cm, label=right:\Huge{\textbf{#1}}] (1) {};
\node[fill=black, circle, minimum size=0.3cm, label=left:\Huge{\textbf{#2}}] (2) [below left of=1] {};
\node[fill=black, circle, minimum size=0.3cm, label=right:\Huge{\textbf{#3}}] (3) [below right of=1] {};
\node[fill=black, circle, minimum size=0.3cm, label=right:\Huge{\textbf{#4}}] (4) [below of=3] {};
\draw (2)--(1)--(3)--(4);
\end{tikzpicture}}}
\newcommand{\qexample}[4]{
\resizebox{0.8cm}{0.6cm}{
\begin{tikzpicture}
\node[fill=black, circle, minimum size=0.3cm, label=left:\Huge{\textbf{#1}}] (1) {};
\node[fill=black, circle, minimum size=0.3cm, label=left:\Huge{\textbf{#2}}] (2) [below of=1] {};
\node[fill=black, circle, minimum size=0.3cm, label=right:\Huge{\textbf{#3}}] (3) [right of=2] {};
\node[fill=black, circle, minimum size=0.3cm, label=right:\Huge{\textbf{#4}}] (4) [right of=1] {};
\draw (1)--(2)--(3)--(4)--(1);
\end{tikzpicture}}}
\newcommand{\notflag}[4]{
\resizebox{0.8cm}{0.6cm}{
\begin{tikzpicture}
\node[fill=black, circle, minimum size=0.3cm, label=right:\Huge{\textbf{#1}}] (1) {};
\node[fill=black, circle, minimum size=0.3cm, label=left:\Huge{\textbf{#2}}] (2) [below left of=1] {};
\node[fill=black, circle, minimum size=0.3cm, label=right:\Huge{\textbf{#3}}] (3) [below right of=1] {};
\node[fill=black, circle, minimum size=0.3cm, label=right:\Huge{\textbf{#4}}] (4) [below of=3] {};
\draw (3)--(2)--(1)--(3)--(4);
\end{tikzpicture}}}
\newcommand{\labeledclaw}[4]{
\resizebox{0.8cm}{0.6cm}{
\begin{tikzpicture}
\node[fill=black, circle, minimum size=0.3cm, label=right:\Huge{\textbf{#1}}] (1) {};
\node[fill=black, circle, minimum size=0.3cm, label=left:\Huge{\textbf{#2}}] (2) [below left of=1] {};
\node[fill=black, circle, minimum size=0.3cm, label=below:\Huge{\textbf{#3}}] (3) [below of=1] {};
\node[fill=black, circle, minimum size=0.3cm, label=right:\Huge{\textbf{#4}}] (4) [below right of=1] {};
\draw (2)--(1)--(3)--(1)--(4);
\end{tikzpicture}}}
\newcommand{\labeledvedge}[2]{
\resizebox{0.32cm}{0.4cm}{
\begin{tikzpicture}
\node[fill=black, circle, minimum size=0.3cm, label=left:\Huge{\textbf{#1}}] (1) {};
\node[fill=black, circle, minimum size=0.3cm, label=left:\Huge{\textbf{#2}}] (2) [below of=1] {};
\draw (2)--(1);
\end{tikzpicture}}}
\newcommand{\labeledvnonedge}[2]{
\resizebox{0.32cm}{0.4cm}{
\begin{tikzpicture}
\node[fill=black, circle, minimum size=0.3cm, label=left:\Huge{\textbf{#1}}] (1) {};
\node[fill=black, circle, minimum size=0.3cm, label=left:\Huge{\textbf{#2}}] (2) [below of=1] {};
\end{tikzpicture}}}
\begin{document}

\title{Symmetry in Tur{\'a}n Sums of Squares Polynomials from Flag Algebras}

\author{Annie Raymond}
\address{Department of Mathematics, University of Washington, Box
  354350, Seattle, WA 98195, USA} \email{raymonda@uw.edu}

\author{Mohit Singh}
\address{H. Milton Stewart School of Industrial and Systems Engineering, Georgia Institute of Technology, Atlanta, GA 30332}
\email{mohitsinghr@gmail.com}

\author{Rekha R. Thomas}
\address{Department of Mathematics, University of Washington, Box
  354350, Seattle, WA 98195, USA} \email{rrthomas@uw.edu}

\date{\today}

\maketitle

\begin{abstract} Tur\'an problems in extremal combinatorics
ask to find asymptotic bounds on the edge densities of
graphs and hypergraphs that avoid specified subgraphs.
The theory of flag algebras proposed by Razborov provides powerful
methods based on semidefinite programming to find sums of squares that establish
edge density inequalities in Tur\'an problems.
Working with polynomial analogs of
the flag algebra entities, we prove that such sums of squares created by flag algebras
can be retrieved
from a restricted version of the symmetry-adapted semidefinite program
proposed by Gatermann and Parrilo. This involves using the representation theory of the symmetric group for finding succinct sums of squares expressions for invariant polynomials.
The connection reveals
several combinatorial and structural properties of flag algebra sums of squares, and offers
new tools for Tur\'an and other related problems.
\keywords{Tur\'an problems, flag algebra, sums of squares, symmetry-adapted semidefinite programs, symmetric group}
\end{abstract}

\section{Introduction}
The \emph{Tur{\'a}n problem}  from extremal combinatorics asks the following question: given a graph $A$, what is the maximum number of edges in a graph on $n$ vertices not containing $A$ as a subgraph? Tur{\'a}n \cite{Turan} answered this question for $A=K_s$, the complete graph on $s$ vertices, generalizing a classical result of Mantel~\cite{Mantel07} for triangle-free graphs, and
establishing the field of extremal graph theory. In general, for any graph $A$, Erd\"os and Stone~\cite{ES46} identified the maximum possible density of edges in any $A$-free graph asymptotically. The \emph{hypergraph Tur{\'a}n problem} asks the same question for hypergraphs, but the current understanding of this problem is far from satisfactory. In particular, even asymptotically, tight bounds on the maximum number of edges in a $n$-vertex $3$-uniform hypergraph\footnote{A $r$-uniform hypergraph has (hyper)edges of size $r$.} not containing a complete graph of size four is not known. A variety of general techniques have been developed to prove bounds for this long-standing hypergraph Tur{\'a}n problem; see for example \cite{ChungLu}, \cite{FranklFuredi}, \cite{LuZhao}, \cite{Pikhurko}, \cite{Sidorenko}, and \cite{Keevash11} for a survey.

Recently, semidefinite programming methods arising from the powerful theory of \emph{flag algebras} introduced by Razborov~\cite{RazborovFlagAlgebras} have led to significant progress on this problem. Indeed, many of the previous bounds can be proven via this technique and several new results giving the tightest known bounds have been obtained~\cite{Razborov10,Razborov13,Razborov14,Falgas-RavryVaughan}. For instance, Razborov in \cite{Razborov10} proved that the (maximum) asymptotic edge density of a $3$-uniform hypergraph without a $4$-clique is 0.561666 (Tur\'an~\cite{Turan61} conjectured it to be $\frac{5}{9}$). Moreover, he also showed in the same paper that, if one forbids an additional subgraph, then the asymptotic edge density is indeed $\frac{5}{9}$.  Razborov's method relies on establishing inequalities involving densities of suitably chosen subgraphs in any $n$-vertex graph/hypergraph. This is done by lower bounding density expressions with a scalar sum of squares (sos) coming from flags. A suitable sos is found by formulating a semidefinite program (SDP) whose size depends on the flags that are used. The key to the success of this method is that the size of the SDP is thus independent of the number of vertices, which is particularly helpful for asymptotic results. However, deciding which flags are needed to construct the flag sos expressions is an art.

Our work is motivated by the basic question as to whether there is a fundamental connection between Razborov's scalar flag sos methods and the more standard sos theory for polynomials.  Expressing a polynomial with real coefficients as a sos of polynomials in order to certify its nonnegativity is a well-established technique in real algebraic geometry going back at least to the 19th century. In recent years, these ideas have acquired new life following the realization that sos polynomials can be found via the modern tool of semidefinite programming which has led to remarkable progress in optimization and algorithm design. For  an introduction to these methods, see one of \cite[Chapters 1 \& 2]{BPTSIAMBook}, \cite{LaurentSurvey}, or \cite{Parrilo}.

In this paper, we show that indeed there is a deep connection between the sos methods coming from flags and those for polynomials in real algebraic geometry. We show that symmetry-reduction in polynomial optimization is precisely the right framework through which this relationship can be established. This brings in tools from the representation theory of the symmetric group, highlighting the many combinatorial features of flag sos expressions.

Symmetry-reduction in polynomial optimization, or more generally semidefinite programming, is a powerful technique and has been useful in many settings \cite {Schrijver79}, \cite{SchrijverInvariantSDPSurvey}, \cite{associationSchemesSurvey}, \cite{GatermannParrilo}. When a nonnegative polynomial is invariant under the action of a finite group, the representation theory of the group can be used to simplify the SDP used to obtain the sos certificate for its nonnegativity.
In \cite{GatermannParrilo}, Gatermann and Parrilo show that in this invariant setting the
original SDP breaks into several smaller (but coupled) SDPs, each indexed by an \emph{irreducible representation} of the group, leading to tremendous computational savings. We appeal to this framework to establish our results.

Our main technical result shows that the flag algebra method for establishing graph density inequalities embeds naturally in a restricted version of the Gatermann-Parrilo symmetry-adapted SDP.  Our results rely on the rich combinatorics hidden in the sos expressions coming from flag algebras that we expose using the representation theory of the symmetric group.
We give a precise description of the symmetry-reduced SDP in terms of the irreducible representations of the symmetric group and show that only certain irreducibles are needed.
Consequently, we prove that the size of this SDP is independent of the number of vertices in the associated graphs, as in Razborov's methods. This offers a systematic way of establishing graph density inequalities, and more generally, Cauchy-Schwarz proofs coming from flags, through standard sos methods where no sophisticated choices are necessary.  

\subsection{Our results in detail and the organization of the paper}
For notational simplicity, we restrict the exposition in this paper to Tur\'an-type problems in the setting of graphs. Our results extend naturally to the broader realm of hypergraphs, digraphs, tournaments, etc, which carry many open problems.
In the conclusion of this paper, we will elaborate on the modifications
needed for these extensions.

Restricting to the setting of graphs,
assume that there is only one graph $A$ that must be avoided in a Tur\'an problem.  The more general setting of avoiding all graphs in a family is treated similarly. Also, we will work in the setting of avoiding $A$ as an {\em induced} subgraph. Note that this setting is general, and that the non-induced setting can be modeled through it by forbidding every induced graph containing $A$.  The main technical challenge in Tur\'an problems is to show an upper bound on the edge density of any $A$-free graph.

 The first step in linking flag algebra methods for Tur\'an problems to the symmetry-reduction techniques of
\cite{GatermannParrilo} is to view graph density expressions as polynomials modulo an ideal, and flag sos expressions as polynomial sos expressions.  This is done in Section~\ref{sec:FlagAlgebras}.

We begin with a few basic definitions. For a fixed positive integer $n$, the polynomials we work with
lie in $\RR[{\bf x}] := \RR[\mathsf{x}_{ij} \,:\, 1 \leq i < j \leq n]$, the polynomial ring over $\RR$ in $n \choose 2$ variables indexed by the edges in the complete graph $K_n$.
If $A$ denotes the graph that must be avoided in the Tur\'an problem we are interested in, then the ideal we need, $\mathscr{I}_n^{{A}}$, is precisely the set of polynomials in $\RR[{\bf x}]$ that vanish on the
characteristic vectors of all graphs on $n$ vertices that do not contain ${A}$ as an induced subgraph.
A polynomial is nonnegative on this finite set of characteristic vectors if and only if it is equivalent to a sos polynomial {\em modulo} $\mathscr{I}_n^{{A}}$.

Two polynomials $\mathsf{f}$ and $\mathsf{g}$ are equivalent modulo
$\mathscr{I}_n^{{A}}$ if and only if
$\mathsf{f}-\mathsf{g} \in \mathscr{I}_n^{{A}}$,
written as $\mathsf{f} \equiv \mathsf{g}$ mod $\mathscr{I}_n^{{A}}$.
This equivalence relation differentiates between functions on the zeros of $\mathscr{I}_n^{{A}}$,
i.e., $\mathsf{f} \equiv \mathsf{g}$ mod $\mathscr{I}_n^{{A}}$ if and only if
$\mathsf{f}(\bm{v}) = \mathsf{g}(\bm{v})$ for every zero $\bm{v}$ of
$\mathscr{I}_n^{{A}}$. Therefore, if
$\mathsf{g} = \sum \mathsf{h}_j^2$ is a sos, then $\mathsf{f}$ is nonnegative on the characteristic vectors of all
${A}$-free graphs on $n$ vertices.
Further, we say that $\mathsf{f}$ is $d$-sos mod $\mathscr{I}_n^{{A}}$ if each $\mathsf{h}_j$ has degree at most $d$. 
Every $d$-sos polynomial has the form $\mathbf{y}^T Q \mathbf{y}$  where $Q$ is a $l\times l$ positive semidefinite (psd) matrix and $\mathbf{y}$ is a $l$-dimensional vector whose coordinates are polynomials of degree at most $d$.
This allows us to use semidefinite programming to
search for $d$-sos expressions for a given polynomial modulo $\mathscr{I}_n^{{A}}$.  

In Section~\ref{sec:FlagAlgebras}, we describe the polynomial analogs of the ingredients in a sos proof coming from flag algebras. This allows us to translate flag sos expressions to polynomial sos expressions for density polynomials as shown in Propositions  \ref{prop:translation-sos} and \ref{cor:translation-sos}.
We illustrate our polynomial translation on Mantel's theorem. The complete proof, including the flag algebra approach for Tur\'an problems, can be found in the Appendix.

Given the polynomial formulation, an approach to showing an upper bound on the graph density polynomial is to use the standard sos method in polynomial optimization.  This raises several natural questions. Firstly, whether flag sos expressions can be retrieved via this approach? Secondly, whether the size of the SDP formulated in this approach will be independent of $n$ as in the flag algebra framework? In this paper, we answer both questions affirmatively.

A priori, searching for a $d$-sos proof leads to a SDP formulation whose size grows with $n$. The first step in establishing our result is to notice that the graph density polynomial whose nonnegativity we need to establish is invariant under an action induced by the symmetric group $\mathfrak{S}_n$ on $n$ letters acting on the vertices of $K_n$. Therefore, one can use the symmetry-reduction techniques in \cite{GatermannParrilo} to simplify the computational cost of searching for its sos certificate.
This in turn relies on the representation theory of $\mathfrak{S}_n$; we explain the basics of this theory in Section~\ref{sec:representation theory basics}. We then describe the symmetry-reduction strategy of \cite{GatermannParrilo} in Section~\ref{sec:GP method} which breaks the SDP that searches for a sos expression into smaller SDPs that are indexed by the irreducible representations of $\mathfrak{S}_n$ or, equivalently, the partitions of $n$.
This section is largely expository but it is crucial for understanding our main results in Section~\ref{sec:main results}. For efficiency, we tailor all discussion of \cite{GatermannParrilo}
to $\mathfrak{S}_n$ which in turn creates a new set of combinatorial tools for problems to which flag sos methods apply.

In Section~\ref{sec:main results}, we come to our main results. Suppose we fix a maximum degree $d$ for the sos polynomials we are searching for, and let $\RR[{\bf x}]_{\leq d}$ denote the vector space of all polynomials in $\RR[{\bf x}]$ of degree up to $d$. The group $\mathfrak{S}_n$ breaks $\RR[{\bf x}]_{\leq d}$ into a direct sum of subspaces indexed by the
partitions of $n$, called the isotypic decomposition of $\RR[{\bf x}]_{\leq d}$. We first establish that the atomic pieces of the sos polynomials that come out of flag algebras
are invariant with respect to the {\em row group} of a {\em tableau} defined from the  flags that are chosen  by the flag algebra method (Theorem~\ref{lcorbits}). Next, using the theory of restricted representations, we decide which subset of subspaces in the isotypic decomposition of $\RR[{\bf x}]_{\leq d}$ contain the polynomials in a flag algebra sos in their span. Finally, we show that
the 
sos expressions from flag algebras can be retrieved from the Gatermann-Parrilo SDP restricted to those partitions that survive in the previous step.
The key point to note here is that the number of partitions indexing these necessary subspaces is not a function of $n$, but rather of $d$. So if we fix $d$ and let $n$ go to infinity, the number of partitions, and the sizes of the corresponding SDPs, will stay fixed. This answers the two questions raised above and links flag algebra methods to symmetry-reduction in semidefinite programming. 
Under the usual action of 
$\mathfrak{S}_n$ on the polynomial ring $\RR[x_1, \ldots, x_n]$, \cite{Thorsten} has also shown that one can get sos expressions for symmetric polynomials whose size is independent of $n$.

In Section~\ref{sec:Mantel1}, we illustrate our main results on the Mantel example. We will see that for any $n$,
just two specific partitions are enough to obtain the sos expression from flag algebras.
Since there are many simple proofs of Mantel's result, the goal of using this example is simply to
illustrate the chain of results that make up this paper. It is both simple and rich enough for this purpose.

We conclude in Section~\ref{sec:conclusion} where we explain how to apply our techniques in other settings such as directed graphs, hypergraphs and tournaments, and also discuss different future directions for this work.

\vspace{0.1cm}

\noindent{\bf Acknowledgements}. We thank Greg Blekherman for several important inputs to this paper relating to representation theory. They came at crucial junctures and helped us along greatly. We also thank  Andrew Berget, Monty McGovern, Pablo Parrilo, James Pfeiffer, Paul Smith and Vasu Tewari for helpful conversations and suggestions.
We also thank the referees of this paper for their valuable comments which have improved the content and exposition.

\newcommand{\pd}{\mathsf{d}}
\newcommand{\pp}{\mathsf{p}}
\newcommand{\EE}{\mathbb{E}}

\section{Sums of Squares from Flag Algebras}  \label{sec:FlagAlgebras}
In this section we present the polynomial analogs of
Razborov's flag algebra method for proving
inequalities on graph densities. As mentioned in the Introduction, we restrict our attention to graphs. For generalizations, see the conclusion of the paper. The key new feature distinguishing this work from the literature (\cite{Falgas-RavryVaughan}, \cite{RazborovFlagAlgebras}, \cite{Razborov14}) is that we think of all densities as polynomials that can be evaluated on characteristic vectors of graphs. If the reader is unfamiliar with flag algebras, we highly recommend reading Section~2.2 of \cite{Falgas-RavryVaughan} first to see a concrete application of the flag method to Mantel's theorem. The polynomial version may appear more difficult to parse at first but they are simply functions on the (finite) set of characteristic vectors of the graphs allowed by the problem that evaluate to density expressions as in \cite{Falgas-RavryVaughan} and \cite{Razborov10}.  

The polynomial translation illustrates how certificates based on flag algebras can be interpreted as polynomial sos proofs. The polynomials appearing in Razborov's sos proofs and their specific symmetries will be key in obtaining our main results efficiently via the Gatermann-Parrilo framework introduced in Section~\ref{sec:GatermannParrilo}. To illustrate the method on an example, we use the polynomial version of flag algebras to prove Mantel's theorem at the end of this section.

Consider the general problem of certifying an inequality involving graph densities over all graphs with a
certain property. We first fix $n$, the number of vertices in our graphs. Let $\mathcal{G}$ be the set of all undirected graphs up to isomorphism with the desired property, and let $\mathcal{G}_n$ be the graphs in $\mathcal{G}$ that have $n$ vertices.
Also, let $V(G)$ and $E(G)$ denote the sets of vertices and edges of $G$. We represent a graph $G$ by its characteristic vector  $\mathds{1}_G \in \{0,1\}^{\binom{n}{2}}$ whose $ij$-th coordinate is $1$ if and only if $\{i,j\}\in E(G)$. Throughout,
we work with the polynomial ring $\RR[{\mathbf x}] = \RR[\mathsf{x}_{ij}:1 \leq i < j \leq n]$ described earlier.

Fix an integer $m$ such that $m<n$, and let $H \in \mathcal{G}_m$. Furthermore, let $\textup{Inj}(S,[n])$ denote the set of injective maps $\alpha:S\rightarrow [n]$ for any set $S$. For a fixed $\alpha \in
\textup{Inj}(V(H),[n])$, define the polynomial
\begin{align} \label{eq:qhf}
\mathsf{q}^\alpha_H := \prod_{\{i,j\}\in E(H)}\mathsf{x}_{\alpha(i)\alpha(j)}\prod_{\{i,j\}\in \binom{V(H)}{2}\backslash E(H)}(1-\mathsf{x}_{\alpha(i)\alpha(j)}) \in \RR[{\bf x}].
\end{align}
For a graph $G \in \mathcal{G}_n$,  $\mathsf{q}^\alpha_H(\mathds{1}_G) = 0$ if and only if at least one of the following is true: (i) for some $\{i,j\} \in E(H)$, $\{\alpha(i), \alpha(j) \}$ is not an edge of $G$ or (ii) for some $\{i,j\} \in \binom{V(H)}{2}\backslash E(H)$, $\{\alpha(i),\alpha(j)\}$ is an edge of $G$. If $\mathsf{q}_H^\alpha(\mathds{1}_G)=1$, we say that the vertices $\alpha(V(H))$ \emph{label-induce} $H$ in $G$. Being label-induced is stronger than being induced, since the set of vertices $\alpha(V(H)$ might induce (in the usual sense of the word) $H$ in $G$ even if $\mathsf{q}_H^\alpha(\mathds{1}_G)=0$. If $\alpha(V(H))$ does not induce $H$ in $G$, then $\mathsf{q}_H^\alpha(\mathds{1}_G)=0$.
\begin{example}
Let $H=$\qexample{0}{1}{2}{3}. Suppose that $\alpha(i)=i$ for every $i\in \{0,1,2,3\}$. Then $q_H^\alpha(\mathds{1}_H)=1$, i.e., $\alpha(V(H))$ label-induces $H$ in $H$. However, if $\alpha(0)=0$, $\alpha(1)=1$, $\alpha(2)=3$ and $\alpha(3)=2$, then $q^\alpha_H(\mathds{1}_H)=0$ (since, for example, $\alpha(1)$ and $\alpha(2)$ do not form an edge) even though $\alpha(V(H))$ does induce $H$ in $H$.
\end{example}

We can use the polynomials $\mathsf{q}^\alpha_H$ for different injections $\alpha$ to calculate the density of $H$ in $G$. Define
\begin{align} \label{eq:dh}
\mathsf{p}_H & :=\frac{1}{\binom{n}{|V(H)|}} \cdot \sum_{\substack{S\subseteq [n] :\\|S|=|V(H)|}} \frac{1}{a_H} \sum_{\alpha\in \textup{Inj}(V(H),S)} \mathsf{q}_H^\alpha \\
& = \frac{1}{a_H \binom{n}{|V(H)|}} \cdot \sum_{\alpha\in \textup{Inj}(V(H),[n])} \mathsf{q}_H^\alpha
\end{align}

where

$$a_H = \sum_{\alpha\in \textup{Inj}(V(H),V(H))} \mathsf{q}_H^\alpha(\mathds{1}_H).$$
The quantity $a_H$ is the number of label-induced copies of $H$ in itself. Thus, for any subset $S \subseteq[n]$ of size $|V(H)|$, if $S$ induces $H$ in $G$, then $$\sum_{\alpha\in \textup{Inj}(V(H),S)} q_H^\alpha(\mathds{1}_G) = a_H.$$ Otherwise, $\displaystyle\sum_{\alpha\in \textup{Inj}(V(H),S)} q_H^\alpha(\mathds{1}_G) =0$. Therefore, $\frac{1}{a_H}\displaystyle\sum_{\alpha\in \textup{Inj}(V(H),S)} q_H^\alpha(\mathds{1}_G) \in \{0,1\}$ and it is one if and only if $S$ induces $H$ in $G$.

\begin{example}
Again, let $H=$\qexample{0}{1}{2}{3}. Out of the $4!=24$ maps in $\textup{Inj}(V(H),V(H))$, eight of them label-induce $H$ in itself, namely the maps $\alpha_j$ and $\beta_j$ for $j\in \{0,1,2,3\}$ such that $\alpha_j(i)\equiv i+j \mod 4$ and $\beta_j(i)\equiv 4-i+j \mod 4$ for every $i \in \{0,1,2,3\}$. Thus $a_H=8$.
\end{example}

Hence, evaluated on $\mathds{1}_G$, $\mathsf{p}_H$ yields the density of $H$ in $G$,
i.e., the probability that a collection of $m$ vertices in $G$ chosen uniformly at
random will induce a copy of $H$ in $G$ up to isomorphism. Note that $\mathsf{p}_H(\mathds{1}_G)=p(H,G)$ in \cite{RazborovFlagAlgebras}.

A {\em type} of size $k$ ($\leq n$) is a $k$-vertex graph $\sigma$
in which every vertex is labeled with a distinct element of
$[k]$. For an integer $l \geq k$, a $\sigma$-{\em flag} $F$ of size $l$ is a graph in $\mathcal{G}_l$ which has $k$
vertices labeled $1, \ldots, k$ and the bijective map that sends vertex labeled $i$ in $\sigma$ to the vertex
labeled $i$ in $F$
label-induces
a copy of $\sigma$ in $F$ with identical labels for the vertices.  Let
$\mathcal{F}^\sigma_l$ be the set of all $\sigma$-flags of size $l$ up
to isomorphism.

\begin{example}
Let $\sigma=\labeledcherries{1}{2}{3}$. Then, \flagnotflag{1}{2}{3}{} is in $\mathcal{F}^\sigma_4$. However, \flagnotflag{2}{1}{3}{} is not since $\sigma$ is mislabeled; \notflag{1}{2}{3}{} also is not since vertices $1,2,3$ do not induce $\sigma$.
\end{example}

Fix a type $\sigma$ of size $k$ and $l\geq k$.  For a flag
$F \in \mathcal{F}^\sigma_l$ and an injective map $\alpha:V(F)\rightarrow [n]$, we have  the
polynomial $\mathsf{q}^\alpha_F$ as in \eqref{eq:qhf}. Now suppose $\theta$ is a fixed labeling of $k$ vertices in $G$ using all the labels in $[k]$, i.e., $\theta\in \textup{Inj}([k],[n])$ is an injective map from $[k]$ to $[n]=V(G)$. We say $\alpha:V(F)\rightarrow [n]$ respects the labeling $\theta$ if $\alpha(v)=\theta(i)$ for any vertex $v\in V(F)$ labeled $i\in [k]$. Let $\textup{Inj}_\theta(V(F),[n])$ denote the set of injective maps $\alpha:V(F)\rightarrow [n]$ that respect the labeling $\theta$.  Then we define the following density polynomial
\begin{align}\label{dthetaf}
\mathsf{p}^\theta_F&:=\frac{1}{\binom{n-k}{l-k}} \sum_{\substack{S\subseteq [n]: \\|S|=|V(F)|,\\ S\supseteq \textup{im}(\theta)}} \frac{1}{a_F^\sigma} \sum_{\alpha\in \textup{Inj}_\theta(V(F),S)} \mathsf{q}_F^\alpha\\
&=\frac{1}{a_F^\sigma \binom{n-k}{l-k}} \sum_{\alpha\in \textup{Inj}_\theta(V(F),[n])} \mathsf{q}_F^\alpha,
\end{align}

where

$$a_F^\sigma = \sum_{\substack{\alpha\in \textup{Inj}(V(F),V(F)):\\ \alpha(v)=v \ \forall v\in V(\sigma)}} \mathsf{q}_F^\alpha(\mathds{1}_F).$$
The quantity $a_F^\sigma$ is the number of label-induced copies of $F$ in $F$ such that every 
vertex in $\sigma$ is sent to itself.
Note that $\mathsf{p}_F^\theta(\mathds{1}_G)=p(F_1;F_2)$ in \cite{RazborovFlagAlgebras} where $F_1=(F,\theta)\in \mathcal{F}_l^\sigma$ and $F_2=(G,\theta)\in \mathcal{F}_n^\sigma$ where $|\sigma|=k$. Indeed, $\mathsf{p}^\theta_F(\mathds{1}_G)$ is the probability that the $k$ vertices of $G$ labeled by $\theta$ along with the remaining  $l-k$ unlabeled vertices picked uniformly at random induce a copy of $F$ in $G$.

Razborov's flag algebra methods can be used to certify the nonnegativity of graph density functions, i.e, functions involving $\mathsf{p}_H(\mathds{1}_G)$ (or Razborov's $p(H,G)$) for different graphs $H$. For example, to retrieve Mantel's theorem, we want to show that $r=\frac{1}{2}-p(e,G)$ is nonnegative over all triangle-free graphs on $n$ vertices where $n\rightarrow \infty$. Here, $e$ is the 2-vertex graph consisting of one edge. This is done by
expressing the given graph density function $r$ as a sos of linear combinations of flag densities, thus establishing the nonnegativity of $r$. This in turn involves finding a psd matrix $M$ such that $r$ is the average over $\theta$'s of $(f_1(G), \ldots, f_s(G))M(f_1(G), \ldots, f_s(G))^\top$ where $f_i(G)$ is a linear combination of $\sigma$-flag densities $\mathsf{p}_F^\theta(\mathds{1}_G)$ for different flags $F$. Razborov refers to such certificates as \emph{Cauchy-Schwarz} proofs (for the nonnegativity of $r$).

The main result of this section is that one can interpret Razborov's Cauchy-Schwarz certificates as sum of squares of polynomials modulo an ideal, using the polynomial analogs of densities that we constructed in this section. First we give a high level version of this result.

\begin{proposition}\label{prop:translation-sos}
Let $r$ be a function of graph densities that is nonnegative over a family of graphs as the the number of vertices goes to infinity, and let $\mathsf{r}$ be the polynomial analog of $r$. Suppose we are given a Cauchy-Schwarz proof of the nonnegativity of $r$ where the  flags used have type $\sigma$ and size at most $l$.  Then $\mathsf{r}$ can be written as a sum of squares of polynomials modulo the vanishing ideal of the family of graphs under consideration.
\end{proposition}

\proof
Suppose $r$ has a certificate of nonnegativity in a Cauchy-Schwarz proof using flags of size $l$ and
type $\sigma$.  Then this certificate is the average of an expression of the form
$$(f_1(G), \ldots, f_s(G)) M (f_1,(G) \ldots, f_s(G))^\top$$ over all $\theta$, where each $f_i(G)$ is a linear combination of $\sigma$-flags densities and $M$ is psd. In Razborov's language this means that $f_i = \sum_j b_j^i p(B_j, B)$ where $B_j = (F_j, \theta)\in \mathcal{F}_l^\sigma$ and $B=(G,\theta)\in \mathcal{F}_n^\sigma$.

We now replace these expressions by their polynomial analogs.
Let $\mathscr{I}$ be the vanishing ideal of the characteristic vectors all graphs in the family being considered.
Since $f_i(G) = \sum_{j} b_j^i \mathsf{p}_{F_j}^\theta(\mathds{1}_G)$, for a graph $G$ in our family,
$f_i(G)$ is the evaluation of the polynomial
$\mathsf{f}_i := \sum_{j} b_j \mathsf{p}_{F_j}^\theta$ on the characteristic vector of $G$ which is a zero of $\mathscr{I}$. The averaging
in the sos expression for $r$ amounts to averaging $(\mathsf{f}_1, \ldots, \mathsf{f}_s)M(\mathsf{f}_1, \ldots, \mathsf{f}_s)$ over $\theta$. This averaging in turn, is equivalent to taking the expectation over all maps $\theta$. Thus overall, we get that
$$\mathsf{r} \equiv \EE_{\theta} \left[ (\mathsf{f}_1, \ldots, \mathsf{f}_s)M(\mathsf{f}_1, \ldots, \mathsf{f}_s)^\top \right] \,\,\textup{ mod } \mathscr{I}$$
which says that when evaluated on the zeros of $\mathscr{I}$, which are precisely the characteristic vectors of graphs in our family, the function $r$ and the expression on the right hand side are equal. This is exactly what the scalar sos in the Cauchy-Schwarz proof was saying.
\qed

We now write a more precise version of Proposition~\ref{prop:translation-sos} that will be helpful in later sections.

\begin{proposition}\label{cor:translation-sos}
Assume the same hypotheses as in Proposition~\ref{prop:translation-sos}, and consider the vector
of flag density polynomials $\mathbf{p}^{\theta, \sigma, l}=(\pp^{\theta}_{F})_{F\in  \mathcal{F}^\sigma_{l}}$ for all flags with type $\sigma$, size $l$, a fixed numbering $\theta$.
Then there exists a psd matrix $Q \in \RR^{|\mathcal{F}^\sigma_{l}|\times |\mathcal{F}^\sigma_{l}|}$  and a sos certificate for the nonnegativity of $\mathsf{r}$ of the following form:
\begin{align}\label{razborovsosshape}
\mathsf{r} \equiv \EE_{\theta} \left[ {\mathbf{p}^{\theta, \sigma, l}}^\top Q \mathbf{p}^{\theta, \sigma, l} \right]
\end{align}
modulo the vanishing ideal $\mathscr{I}$ of the family of graphs under consideration.
\end{proposition}

\proof
As in the proof of Proposition~\ref{prop:translation-sos}, we arrive at the expression
$$\mathsf{r} \equiv \EE_{\theta} \left[ (\mathsf{f}_1, \ldots, \mathsf{f}_s)M(\mathsf{f}_1, \ldots, \mathsf{f}_s)^\top \right] \,\,\textup{ mod } \mathscr{I},$$
where $\mathsf{f}_i = \sum_{j} b_j \mathsf{p}_{F_j}^\theta$ and $M$ is a psd matrix.

Writing $M = {(M^{\frac{1}{2}})}^\top M^{\frac{1}{2}}$ where $M^{\frac{1}{2}} = (m_{ji})$  is a $p \times s$ matrix, we have
$$(\mathsf{f}_1, \ldots, \mathsf{f}_s)M(\mathsf{f}_1, \ldots, \mathsf{f}_s)^\top = \sum_{j=1}^p \left(\sum_{i=1}^s m_{ji} f_i\right)^2.$$

Since each $\mathsf{f}_i$ is a linear combination of $\mathsf{p}_{F_t}^\theta$'s, there exists $q_{ti} \in \mathbb{R}$ such that

\begin{align*}
(\mathsf{f}_1, \ldots, \mathsf{f}_s)M(\mathsf{f}_1, \ldots, \mathsf{f}_s)^\top &= \sum_{j=1}^p \left(\sum_{F_t \in \mathcal{F}_l^\sigma} q_{ti} \mathsf{p}_{F_t}^\theta\right)^2 ={\mathbf{p}^{\theta, \sigma, l}}^\top Q \mathbf{p}^{\theta, \sigma, l}
\end{align*}
where $\mathbf{p}^{\theta, \sigma, l}=(\pp^{\theta}_{F})_{F\in  \mathcal{F}^\sigma_{l}}$ and $Q$ is a psd matrix of size $p \times |\mathcal{F}_l^\sigma|$.
\qed

Note that the polynomials that are squared in the sos \eqref{razborovsosshape}, namely the components of the vector $Q^{\frac12} \mathbf{p}^{\theta, \sigma,l}$,
are linear combinations of the flag density polynomials  $\{\pp^{\theta}_F: F\in \mathcal{F}^\sigma_{l}\}$.
This immediately yields some quantitative bounds on the size of $Q$ and the degree of the sos.

\begin{corollary}
The degree of the above sos-proof equals the maximum degree of the polynomials of the form $\pp_{F}^{\theta}$ which is at most $\binom{l}{2}$ (for graphs) where $l$ is the number of vertices in the $\sigma$-flag. Moreover, the size of $Q$ in Proposition~\ref{cor:translation-sos} depends only on the size of $|\mathcal{F}_l^\sigma|$ and not on $n$ (when $k$ and $l$ are fixed).
\end{corollary}

We remark that certain flag Cauchy-Schwarz proofs require choosing several types $\sigma$ and sizes of flags $l$, and then taking a conic combination of sos for each $\sigma$ and $l$. This however does not change the explanations above since the argument stands for each sos.

We illustrate the above Propositions by
providing a  polynomial version of the Cauchy-Schwarz proof of Mantel's theorem presented in \cite{Falgas-RavryVaughan}.

\begin{example}
Consider the problem of finding the maximum edge density of a graph which does not contain any triangle (which we denote by $K_3$). Mantel's famed result states that the maximum edge density of a triangle-free graph
goes to $\frac{1}{2}$ as the number of vertices goes to infinity. Let $\mathcal{G}$ be the family of triangle-free graphs and $\mathcal{G}_n$ be the set of triangle-free graphs on $n$ vertices for a fixed $n$. We need to choose $\sigma$ and $l$ to obtain a flag Cauchy-Schwarz proof of the non-negativity of $\frac{1}{2}-p(e,G)$ for $G \in \mathcal{G}_n$ as $n\rightarrow \infty$.

The
characteristic vectors of $K_3$-free graphs in $\mathcal{G}_n$ are precisely the zeros of the ideal
\begin{align*}
\mathscr{I}_n^{K_3} = \langle \mathsf{x}_{ij}^2 - \mathsf{x}_{ij}, \,\,1 \leq i < j \leq n \rangle
+ \langle x_{ij}x_{ik}x_{il} \,:\, 1 \leq i < j < k \leq n \rangle.
\end{align*}
The polynomial $\mathsf{p}_n := \frac{1}{{n \choose 2}}\sum_{1\leq i< j\leq n} \mathsf{x}_{ij}$ evaluated on $\mathds{1}_G$ is equal to the edge density,  $\frac{|E(G)|}{\binom{|V(G)|}{2}}$, of the graph
$G \in \mathcal{G}_n$. In order to show that the  edge density of any $G \in \mathcal{G}_n$ is at most
$\beta$ (here we would like $\beta$ to be $\frac{1}{2}+O(\frac{1}{n}))$, it suffices to find polynomials $\mathsf{r}_j$ such that
$$\beta-\mathsf{p}_n \equiv \sum_{j} \mathsf{r}_j^2 \ \mod \mathscr{I}_n^{K_3}.$$

Translating the flag sos proof in \cite{Falgas-RavryVaughan}, we obtain the following polynomial sos certificate

\begin{align} \label{equivalencemantel}
\frac12-\mathsf{p}_n + \mathsf{err}  \equiv\mathbb{E}_{\theta}\left[\begin{pmatrix}\mathsf{p}^{\theta}_{F_0} & \mathsf{p}^{\theta}_{F_1}\end{pmatrix}  \begin{pmatrix} \frac12   &-\frac12 \\ -\frac12 &\frac12\end{pmatrix} \begin{pmatrix}\mathsf{p}^{\theta}_{F_0} \\ \mathsf{p}^{\theta}_{F_1} \end{pmatrix}\right] +\frac13 \pp_{H_1}\textrm{ mod } \mathscr{I}_n^{K_3},
\end{align}
where $$ F_0 = \labeledvnonedge{}{1}, \,\,\,\, F_1 = \labeledvedge{}{1}, \,\,\,\,  H_1=\Hone,$$
and $\mathsf{err}(\mathds{1}_G)$ has value $O(\frac{1}{n})$ for every $G \in \mathcal{G}_n$. The first expression on the right hand side is a sos by construction. The second expression is
also a sum of squares because $\left(\mathsf{q}^\alpha_H\right)^2 \equiv \mathsf{q}^\alpha_H \mod \mathscr{I}_n^{K_3}$ since
$\mathsf{x}_{ij}$ and $(1-\mathsf{x}_{ij})$ are equivalent to their squares mod $\mathscr{I}_n^{K_3}$.
Therefore, 
\begin{align}
\mathsf{p}_{H}\equiv \frac{1}{a_{H} \binom{n}{|V(H)|}}\sum_{\alpha\in \textup{Inj}(V(H),[n])}\left(\mathsf{q}^\alpha_{H}\right)^2\    \mod \mathscr{I}_n^{K_3},
\end{align}
for every $H$. In particular, $\mathsf{p}_{H_1}$ is a sos mod $\mathscr{I}_n^{K_3}$. We have expressed the edge density expression on the left hand side as a polynomial sos modulo the ideal $\mathscr{I}_n^{K_3}$.
For a verification of this equivalence as well as a full translation of \cite{Falgas-RavryVaughan} to polynomials, see the Appendix.
\end{example}

\section{Sum of squares representations of invariant polynomials}
\label{sec:GatermannParrilo}

In \cite{GatermannParrilo}, Gatermann and Parrilo use methods from
representation theory to organize the computation of sos expressions for polynomials
that are invariant with respect to a finite group. These symmetry-reduction techniques allow the SDP that provides the potential sos to be broken into several smaller SDPs
that are coupled together, often leading to tremendous computational savings.
Since the graph density polynomials that arise in Tur{\'a}n problems are invariant
under an induced action of the symmetric group $\mathfrak{S}_n$ on $n$ letters, we can apply
the methods in \cite{GatermannParrilo} to provide
an alternate and systematic method for establishing graph density
inequalities.

Our main aim in this section is to describe the strategy and mechanics in
\cite{GatermannParrilo} specialized to our setting of $\mathfrak{S}_n$ acting on $\RR[{\bf x}]$. For a full proof of the Gatermann-Parrilo method specialized to our setting, see Appendix $A$ of \cite{RSST}. To keep the paper self-contained, we assume very little background.

\subsection{Representation theory of the symmetric group} \label{sec:representation theory basics}
There are many excellent expositions of the representation theory of $\mathfrak{S}_n$, and our brief account
below is based on \cite[Chapter 1]{SaganBook}. We will reference general theorems from \cite{SaganBook} even
if we only state their specialized versions for $\mathfrak{S}_n$.

Recall that $\RR[{\bf x}]$ denotes the polynomial ring in the variables $\mathsf{x}_{ij},  1 \leq i < j \leq n$. Since we will be searching for $d$-sos polynomials for a fixed degree $d$, we will focus on
$\RR[\mathbf{x}]_{\leq d}$, the set of all polynomials in $\RR[{\bf x}]$ of degree at most $d$. A natural basis for this vector space is the set of
all monomials of degree at most $d$, and hence the dimension of $V := \RR[\mathbf{x}]_{\leq d}$ is
$D := {e+d  \choose d}$ where $e := {n \choose 2}$. The symmetric group $\mathfrak{S}_n$ acts on monomials in
$\RR[{\bf x}]$ via $\mathfrak{s} \mathsf{x}_{ij} := \mathsf{x}_{\mathfrak{s}(i) \mathfrak{s}(j)}$ for each $\mathfrak{s} \in \mathfrak{S}_n$. Extending this action linearly to the vector space $V$ makes $V$ a $\mathfrak{S}_n$-module.
This means that the multiplication $\mathfrak{s} \mathsf{f}$ for
$\mathfrak{s} \in \mathfrak{S}_n$ and $\mathsf{f} \in V$ satisfies the following properties:
$$
(i) \,\mathfrak{s} \mathsf{f} \in V, \,\,\,\,
(ii) \,(\mathfrak{s} \mathfrak{t}) \mathsf{f} = \mathfrak{s} (\mathfrak{t} \mathsf{f}), \,\,\,\,
(iii) \,\mathfrak{e} \mathsf{f} = \mathsf{f}, \,\,\,\,
(iv) \, \mathfrak{s} ( \alpha \mathsf{f} + \beta \mathsf{g} ) = \alpha \mathfrak{s} \mathsf{f} + \beta \mathfrak{s}  \mathsf{g}
$$
for all $\mathfrak{s}, \mathfrak{t} \in \mathfrak{S}_n, \mathsf{f}, \mathsf{g} \in V, \alpha, \beta \in \RR$,
and where $\mathfrak{e}$ is the identity permutation in $\mathfrak{S}_n$. The $\mathfrak{S}_n$-module $V$ is called
the {\em permutation representation} of $\mathfrak{S}_n$ associated to the monomials of degree at most $d$ for reasons
we will see below.

The $\mathfrak{S}_n$-module $V$ gives rise to a homomorphism $\vartheta \,:\, \mathfrak{S}_n \rightarrow \textup{GL}(V)$,
where $\textup{GL}(V)$ is the set of invertible linear transformations from $V$ to itself,
by defining $\vartheta(\mathfrak{s})$ to be the linear transformation of $V$ corresponding to multiplication by $\mathfrak{s}$. The matrices realizing $\vartheta(\mathfrak{s})$ for all $\mathfrak{s} \in \mathfrak{S}_n$, with respect to a fixed
basis of $V$, form a set of {\em representing matrices} of $\vartheta$.
For example, the representing matrices of $\vartheta$, with respect to the monomial basis of $V$,  are the permutation matrices of size $D \times D$.
This follows since for each $\mathfrak{s} \in \mathfrak{S}_n$, $\vartheta(\mathfrak{s})$
sends a monomial to a monomial. Let $P_{\mathfrak{s}} \in \RR^{D \times D}$ denote the permutation matrix
representing $\vartheta(\mathfrak{s})$.  Note that the matrices $P_{\mathfrak{s}}$ are orthonormal.

Conversely, a homomorphism $\vartheta \,:\, \mathfrak{S}_n \rightarrow \textup{GL}(V)$ makes $V$
a $\mathfrak{S}_n$-module via the multiplication $\mathfrak{s}\mathsf{f} := \vartheta(\mathfrak{s}) \mathsf{f}$ for each
$\mathfrak{s} \in \mathfrak{S}_n$ and $\mathsf{f} \in V$.
By this discussion, a representation of $\mathfrak{S}_n$ refers to the $\mathfrak{S}_n$-module $V$, or the homomorphism
$\vartheta \,:\, \mathfrak{S}_n \rightarrow \textup{GL}(V)$, or even a set of representing matrices of $\vartheta$
with respect to a fixed basis of $V$.
The {\em trivial representation} of $\mathfrak{S}_n$ is the homomorphism
$\vartheta(\mathfrak{s}) = 1$ for all $\mathfrak{s} \in \mathfrak{S}_n$. Equivalently, a $\mathfrak{S}_n$-module $V$ is a trivial representation of $\mathfrak{S}_n$ if $V$ is one-dimensional and $\mathfrak{s} \mathsf{f} = \mathsf{f} $ for all $\mathfrak{s} \in \mathfrak{S}$ and $\mathsf{f} \in V$.

A subspace $W$ of $V$ is a $\mathfrak{S}_n$-{\em submodule} if it is invariant under the action of $\mathfrak{S}_n$, i.e., $\mathfrak{s} \mathsf{f} \in W$ for all $\mathsf{f} \in W$ and $\mathfrak{s} \in \mathfrak{S}_n$.
A $\mathfrak{S}_n$-module is {\em irreducible} if it does not contain any nontrivial submodules, and the associated
homomorphism $\vartheta$ is also said to be irreducible. The irreducible $\mathfrak{S}_n$-modules are indexed by the partitions $\bm{\lambda} = (\lambda_1, \lambda_2, \ldots, \lambda_k)$ of $n$ denoted as $\bm{\lambda} \vdash n$.
There is a canonical irreducible $\mathfrak{S}_n$-module indexed by the partition $\bm{\lambda}$ called the
{\em Specht module} $S^{\bm{\lambda}}$ whose dimension is $n_{\bm{\lambda}}$, the number of {\em standard tableaux} of shape $\bm{\lambda}$. All irreducible representations of $\mathfrak{S}_n$ are isomorphic to one of these Specht modules. See \cite[Chapter 2]{SaganBook} for a detailed account of the combinatorics underlying the representation theory of $\mathfrak{S}_n$.  In Section~\ref{sec:main results}, we give more details of the specific items we will need.

One of the fundamental results in the representation theory of finite groups specialized to our setting says the following.

\begin{theorem} [Maschke's theorem]  \cite[Theorem~1.5.3]{SaganBook} The $\mathfrak{S}_n$-module $V = \RR[\mathbf{x}]_{\leq d}$ breaks into a direct sum of irreducible submodules.
\end{theorem}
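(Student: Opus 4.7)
The plan is to establish complete reducibility via an invariant inner product and orthogonal complements, and then induct on dimension. The key observation to exploit is that the action of $\mathfrak{S}_n$ on $V = \RR[\mathbf{x}]_{\leq d}$ permutes the monomial basis, so with respect to the inner product $\langle \cdot,\cdot\rangle$ on $V$ that declares the monomial basis orthonormal, each representing matrix $P_{\mathfrak{s}}$ is a permutation (hence orthogonal) matrix, as already noted in the discussion preceding the theorem. This yields $\langle \mathfrak{s}\mathsf{f},\mathfrak{s}\mathsf{g}\rangle = \langle \mathsf{f},\mathsf{g}\rangle$ for all $\mathfrak{s}\in\mathfrak{S}_n$ and $\mathsf{f},\mathsf{g}\in V$, so this inner product is $\mathfrak{S}_n$-invariant without any further work. (If one preferred a group-theoretic recipe, one could instead average any inner product over the finite group $\mathfrak{S}_n$ to obtain an invariant one.)

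Next I would verify that if $W \subseteq V$ is any $\mathfrak{S}_n$-submodule, then its orthogonal complement $W^{\perp}$ with respect to this invariant inner product is also an $\mathfrak{S}_n$-submodule. Indeed, for $\mathsf{g}\in W^{\perp}$, $\mathfrak{s}\in\mathfrak{S}_n$, and any $\mathsf{f}\in W$, invariance of $W$ gives $\mathfrak{s}^{-1}\mathsf{f}\in W$, so
$$\langle \mathsf{f},\mathfrak{s}\mathsf{g}\rangle = \langle \mathfrak{s}^{-1}\mathsf{f},\mathsf{g}\rangle = 0,$$
proving $\mathfrak{s}\mathsf{g}\in W^{\perp}$. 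This produces the $\mathfrak{S}_n$-module decomposition $V = W \oplus W^{\perp}$.

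The argument then closes by induction on $\dim V$: if $V$ is irreducible, there is nothing to prove; otherwise pick a proper nontrivial submodule $W$, apply the inductive hypothesis to both $W$ and $W^{\perp}$ (which have strictly smaller dimensions), and take the direct sum of the resulting decompositions into irreducibles. No step should present a serious obstacle here; the only point that requires a touch of care is the invariance of the inner product, which in the present setting comes for free because $\mathfrak{S}_n$ acts on $V$ by permuting monomials.
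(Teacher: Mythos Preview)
The paper does not give its own proof of this statement; it is simply quoted from \cite[Theorem~1.5.3]{SaganBook} as background. Your argument is the standard proof of Maschke's theorem (invariant inner product, orthogonal complement of a submodule is a submodule, induction on dimension), and it is correct as written; the observation that the monomial basis is permuted, so the obvious inner product is already $\mathfrak{S}_n$-invariant, is a nice shortcut in this particular setting.
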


Let $V_{\bm{\lambda}} := \oplus_{i=1}^{m_{\bm{\lambda}}} V_{\bm{\lambda}}^i$
denote the sub-sum of all the isomorphic copies of the irreducible $S^{\bm{\lambda}}$ in a
full irreducible decomposition of $V$ from Maschke's theorem.
While this decomposition is not unique, the {\em multiplicity} $m_{\bm{\lambda}}$ of $S^{\bm{\lambda}}$ in
the decomposition is. The subspace $V_{\bm{\lambda}}$ is called an isotypic of $V$, and the
decomposition
\begin{align} \label{eq:isotypic decomposition}
V = \oplus_{\bm{\lambda} \vdash n}  V_{\bm{\lambda}}
\end{align}
called the {\em isotypic decomposition} of $V$, is unique. A useful fact to note is that a $\mathfrak{S}_n$-invariant polynomial $\mathsf{f}$ (i.e., $\mathfrak{s} \mathsf{f}=\mathsf{f}$ for each $\mathfrak{s}\in \mathfrak{S}_n$) must lie in the isotypic corresponding to the trivial representation of $\mathfrak{S}_n$.

The irreducible decomposition of $V$ guaranteed by Maschke's theorem creates several block-diagonal structures that
are the key to the methods in \cite{GatermannParrilo}. Let
\begin{align} \label{eq:irreducible decomposition}
V = \oplus_{\bm{\lambda} \vdash n} \oplus_{i=1}^{m_{\bm{\lambda}}} V_{\bm{\lambda}}^i
\end{align}
be the  full  decomposition of $V$ into irreducibles
where $V_{\bm{\lambda}}^i \cong S^{\bm{\lambda}}$ for $i=1,\ldots, m_{\bm{\lambda}}$.

The first instance of block structure arises at the level of representing matrices of the homomorphism $\vartheta \,:\, \mathfrak{S}_n \rightarrow \textup{GL}(V)$. Suppose $\mathcal{B}$ is a basis of $V$ obtained by concatenating bases of the different irreducible submodules
in \eqref{eq:irreducible decomposition}. Then the representing matrices of $\vartheta(\mathfrak{s})$  with respect to $\mathcal{B}$ are block-diagonal with a block corresponding to each irreducible $V_{\bm{\lambda}}^i$ in \eqref{eq:irreducible decomposition} of size $n_{\bm{\lambda}} \times n_{\bm{\lambda}}$.
The basis $\mathcal{B}$ is said to be {\em symmetry-adapted} if the following stronger property holds:
for a fixed $\bm{\lambda}$, the $n_{\bm{\lambda}} \times n_{\bm{\lambda}}$ sized blocks corresponding to  the $ m_{\bm{\lambda}}$ irreducibles $V_{\bm{\lambda}}^i$ are {\em exactly the same}, i.e., the representing matrix of $\vartheta(\mathfrak{s})$
with respect to $\mathcal{B}$ has the form:
\begin{align} \label{eq:SA representing matrices}
B_{\mathfrak{s}} = \left[ \begin{array}{ccc|ccc|c}
B_{\bm{\lambda_1}} & \cdots & 0 & 0 & 0 & 0 & \cdots \\
0 & B_{\bm{\lambda_1}} & \cdots & 0 & 0 & 0 & \cdots \\
0  & 0  & \ddots & 0 & 0 & 0 & \cdots \\
\hline
0 & 0 & 0 & B_{\bm{\lambda_2}} & \cdots & 0 & \cdots \\
0 & 0 & 0 & 0 & B_{\bm{\lambda_2}}& \cdots& \cdots  \\
0 & 0 & 0 & 0  & 0 & \ddots & \cdots \\
\hline
\vdots & \vdots &\vdots &\vdots &\vdots &\vdots & \ddots
 \end{array} \right].
\end{align}
It is usual to notate this as
\begin{align}  \label{eq:SA blocks in Bs}
B_{\mathfrak{s}} = \oplus_{\bm{\lambda} \vdash n} \oplus_{i=1}^{m_{\bm{\lambda}}} B_{\bm{\lambda}}, \,\,\,\,\,\textup{ where } B_{\bm{\lambda}} \in \RR^{n_{\bm{\lambda}} \times n_{\bm{\lambda}}}.
\end{align}
A symmetry-adapted basis of $V$ always exists and an algorithm to find it is given in
\cite[Chapter 5.2]{FaesslerStiefelBook}. Recall that the permutation matrices $P_{\mathfrak{s}}$ were representing matrices for $\vartheta$, but they are not block-diagonal.
If $M$ is the change of basis matrix from the monomial basis of $V$ to $\mathcal{B}$, then the new representing matrices are $B_{\mathfrak{s}} := M P_{\mathfrak{s}} M^{-1}$. If $M$ is orthogonal, then $B_{\mathfrak{s}} = M P_{\mathfrak{s}} M^\top$ is also
orthogonal.

Next we consider the set of matrices that commute with every $B_{\mathfrak{s}}$. This is the {\em commutant algebra}
\begin{align} \label{eq:commutant algebra}
\textup{Com } B := \{ Q \in \RR^{D \times D} \,:\, Q B_{\mathfrak{s}} = B_{\mathfrak{s}} Q \,\,\forall \,\,\mathfrak{s} \in \mathfrak{S}_n \}.
\end{align}
Matrices in $\textup{Com } B$ have a very special structure as a consequence of the block-diagonal nature of $B_{\mathfrak{s}}$ and Schur's Lemma \cite[Theorem 1.6.5]{SaganBook}.

\begin{theorem} \cite[Thm~1.7.8 (2)]{SaganBook}\label{thm:block structure of commutants}
If $Q$ lies in the commutant algebra $\textup{Com } B$, then $Q$ is block-diagonal with a block
$Q_{\bm{\lambda}}$ for each partition $\bm{\lambda} \vdash n$.  Further,
$Q_{\bm{\lambda}}$ is a block matrix with $m_{\bm{\lambda}}$ row and column blocks each of size
$n_{\bm{\lambda}} \times n_{\bm{\lambda}}$. The matrices in each block of $Q_{\bm{\lambda}}$ are multiples of the
identity matrix $I_{n_{\bm{\lambda}}}$.
\end{theorem}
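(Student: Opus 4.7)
The plan is to view $Q$ as a $\mathfrak{S}_n$-equivariant endomorphism of $V$ and apply Schur's Lemma twice: first to force $Q$ to respect the isotypic decomposition (yielding the outer block-diagonal structure indexed by partitions $\bm{\lambda} \vdash n$), and then to pin down the structure of $Q$ restricted to each isotypic (yielding the inner $n_{\bm{\lambda}} \times n_{\bm{\lambda}}$ scalar-identity-block structure).

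First I would observe that the defining condition $Q B_{\mathfrak{s}} = B_{\mathfrak{s}} Q$ for every $\mathfrak{s} \in \mathfrak{S}_n$ is exactly the statement that the linear map on $V$ represented by $Q$ in the symmetry-adapted basis $\mathcal{B}$ is $\mathfrak{S}_n$-equivariant. Given the irreducible decomposition $V = \oplus_{\bm{\lambda} \vdash n} \oplus_{i=1}^{m_{\bm{\lambda}}} V_{\bm{\lambda}}^i$ with $V_{\bm{\lambda}}^i \cong S^{\bm{\lambda}}$, consider the composition of the inclusion $V_{\bm{\lambda}}^i \hookrightarrow V$, the action of $Q$, and the projection onto $V_{\bm{\mu}}^j$. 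This composition is an equivariant map between Specht modules and must be zero whenever $\bm{\lambda} \neq \bm{\mu}$ by Schur's Lemma. Hence $Q$ preserves each isotypic $V_{\bm{\lambda}} = \oplus_i V_{\bm{\lambda}}^i$, producing the outer block-diagonal decomposition $Q = \oplus_{\bm{\lambda} \vdash n} Q_{\bm{\lambda}}$.

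Next I would analyze a single $Q_{\bm{\lambda}}$ by examining the induced equivariant maps $\varphi_{ij} \colon V_{\bm{\lambda}}^i \to V_{\bm{\lambda}}^j$ for $1 \leq i,j \leq m_{\bm{\lambda}}$. The crucial use of the symmetry-adapted basis is that the representing matrix of each $\mathfrak{s}$ on $V_{\bm{\lambda}}^i$ is \emph{literally the same matrix} $B_{\bm{\lambda}}(\mathfrak{s}) \in \RR^{n_{\bm{\lambda}} \times n_{\bm{\lambda}}}$, independent of $i$; see \eqref{eq:SA representing matrices}. Identifying $V_{\bm{\lambda}}^i$ and $V_{\bm{\lambda}}^j$ through their respective segments of $\mathcal{B}$, the map $\varphi_{ij}$ corresponds to a matrix $C_{ij}$ satisfying $C_{ij} B_{\bm{\lambda}}(\mathfrak{s}) = B_{\bm{\lambda}}(\mathfrak{s}) C_{ij}$ for every $\mathfrak{s}$. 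Schur's Lemma, applied to the irreducible representation $S^{\bm{\lambda}}$, then forces $C_{ij} = q_{ij} I_{n_{\bm{\lambda}}}$ for some scalar $q_{ij} \in \RR$. Collecting these over all $i,j$ gives the claimed structure: $Q_{\bm{\lambda}}$ is a $m_{\bm{\lambda}} \times m_{\bm{\lambda}}$ arrangement of blocks of size $n_{\bm{\lambda}} \times n_{\bm{\lambda}}$, with each block a scalar multiple of $I_{n_{\bm{\lambda}}}$.

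The main subtlety I expect to address is the use of Schur's Lemma over $\RR$ rather than $\CC$. In general, a real equivariant self-map of a real irreducible representation need only lie in a division algebra ($\RR$, $\CC$, or $\HH$), not necessarily in $\RR$. The saving feature here is specific to the symmetric group: every irreducible representation of $\mathfrak{S}_n$ is absolutely irreducible and realizable over $\QQ$, so the endomorphism ring of any Specht module over $\RR$ is exactly $\RR$, and the scalars $q_{ij}$ are real. Once this point is acknowledged, the remainder of the argument is clean bookkeeping against the symmetry-adapted basis, and the block-diagonal structure of $\textup{Com } B$ drops out as described.
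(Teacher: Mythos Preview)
Your proposal is correct and follows exactly the approach the paper points to: the paper does not give its own proof of this statement but cites it from \cite[Theorem~1.7.8 (2)]{SaganBook}, noting only that it is ``a consequence of the block-diagonal nature of $B_{\mathfrak{s}}$ and Schur's Lemma.'' Your two applications of Schur's Lemma (first across distinct isotypics, then within an isotypic using the fact that the symmetry-adapted basis makes all copies of $B_{\bm{\lambda}}$ literally equal) are precisely the standard argument, and your remark on absolute irreducibility of Specht modules over $\RR$ correctly handles the only potential subtlety.
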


For example, if $m_{\bm{\lambda}} = 3$ and
$n_{\bm{\lambda}} = 2$, then $Q_{\bm{\lambda}}$ is the $3 \times 3$ block matrix shown below on the left.
Now notice that  by permuting rows and columns, we can transform $Q_{\bm{\lambda}}$ to
a block-diagonal matrix with $n_{\bm{\lambda}}$ {\em equal} blocks each of size $m_{\bm{\lambda}} \times m_{\bm{\lambda}}$. This is the block-diagonalization of $Q$ needed in \cite{GatermannParrilo}.
$$
\left[ \begin{array}{cc|cc|cc}
c_1 & 0    & c_2 & 0     & c_3 & 0    \\
0    & c_1 &  0    & c_2 &  0    & c_3 \\
\hline
c_4 & 0    & c_5 & 0     & c_6 & 0    \\
0    & c_4 &  0    & c_5 &  0    & c_6 \\
\hline
c_7 & 0    & c_8 & 0     & c_9 & 0    \\
0    & c_7 &  0    & c_8 &  0    & c_9
\end{array} \right]
\,\,\longrightarrow \,\,
\left[ \begin{array}{ccc|ccc}
c_1 & c_2    & c_3 & 0     & 0 & 0    \\
c_4    & c_5 &  c_6    & 0 &  0    & 0 \\
c_7 & c_8   & c_9 & 0     & 0 & 0    \\
\hline
0    & 0 &  0    & c_1 &  c_2    & c_3 \\
0 & 0    & 0 & c_4     & c_5 & c_6   \\
0    & 0 &  0    & c_7 &  c_8    & c_9
\end{array} \right]  .
$$

The above discussion can be phrased more generally as follows.

\begin{theorem}[The Fundamental Theorem] \cite[pp 40]{FaesslerStiefelBook} \label{thm:Fundamental Theorem}
Consider the decomposition of $V = \RR[\mathbf{x}]_{\leq d}$ as in \eqref{eq:irreducible decomposition}
under the representation $\vartheta \,:\, \mathfrak{S}_n \rightarrow \textup{GL}(V)$ with representing
matrices $B_\mathfrak{s}$ for each $\mathfrak{s} \in \mathfrak{S}_n$ computed with respect to a symmetry-adapted basis of $V$.  Suppose $Q \in \RR^{D \times D}$ is such that
$Q B_{\mathfrak{s}} = B_{\mathfrak{s}} Q$ for all $\mathfrak{s} \in \mathfrak{S}_n$.
Then there is a reordering of the symmetry-adapted basis with respect to which
$Q$ is block-diagonal of the form:
\begin{align} \label{eq:GP block structure matrix form}
Q = \oplus_{\bm{\lambda} \vdash n} \oplus_{i=1}^{n_{\bm{\lambda}}} Q_{\bm{\lambda}}.
\end{align}
\end{theorem}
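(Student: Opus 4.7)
The statement is essentially a repackaging of Theorem~3.2.1 via a permutation of the symmetry-adapted basis, so my plan is short and structural.

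The first step is to observe that the hypothesis $QB_{\mathfrak{s}} = B_{\mathfrak{s}} Q$ for all $\mathfrak{s} \in \mathfrak{S}_n$ is precisely the statement $Q \in \textup{Com } B$, so Theorem~3.2.1 applies directly. This already gives a block-diagonal decomposition $Q = \oplus_{\bm{\lambda} \vdash n} \widetilde{Q}_{\bm{\lambda}}$, and furthermore describes the fine structure of each $\widetilde{Q}_{\bm{\lambda}}$: as a matrix on $V_{\bm{\lambda}} = \oplus_{i=1}^{m_{\bm{\lambda}}} V_{\bm{\lambda}}^i$ in the symmetry-adapted basis, it consists of $m_{\bm{\lambda}}^2$ blocks of size $n_{\bm{\lambda}} \times n_{\bm{\lambda}}$, each of which is a scalar multiple of $I_{n_{\bm{\lambda}}}$.

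The second step is to exhibit a reordering of the symmetry-adapted basis of $V_{\bm{\lambda}}$ that transforms this ``$m_{\bm{\lambda}} \times m_{\bm{\lambda}}$ outer block of scalar inner blocks'' shape into ``$n_{\bm{\lambda}}$ outer blocks of size $m_{\bm{\lambda}} \times m_{\bm{\lambda}}$.'' Concretely, fix in each copy $V_{\bm{\lambda}}^i$ a symmetry-adapted basis $(e^i_1, \ldots, e^i_{n_{\bm{\lambda}}})$ so that the original listing of the basis of $V_{\bm{\lambda}}$ is $(e^1_1, \ldots, e^1_{n_{\bm{\lambda}}}, e^2_1, \ldots, e^2_{n_{\bm{\lambda}}}, \ldots, e^{m_{\bm{\lambda}}}_1, \ldots, e^{m_{\bm{\lambda}}}_{n_{\bm{\lambda}}})$. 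Reorder it by ``slot'' rather than by ``copy,'' namely as $(e^1_1, e^2_1, \ldots, e^{m_{\bm{\lambda}}}_1, e^1_2, \ldots, e^{m_{\bm{\lambda}}}_2, \ldots, e^1_{n_{\bm{\lambda}}}, \ldots, e^{m_{\bm{\lambda}}}_{n_{\bm{\lambda}}})$. The fact that each inner $n_{\bm{\lambda}} \times n_{\bm{\lambda}}$ block is a multiple of identity means the $(i,j)$-entry of $\widetilde{Q}_{\bm{\lambda}}$ that connects the $k$-th slot in copy $i$ to the $\ell$-th slot in copy $j$ is zero unless $k=\ell$; once $k=\ell$ is fixed, the scalar depends only on $(i,j,\bm{\lambda})$. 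This is exactly what the reordering needs in order to produce $n_{\bm{\lambda}}$ identical blocks $Q_{\bm{\lambda}} \in \RR^{m_{\bm{\lambda}} \times m_{\bm{\lambda}}}$, as illustrated by the small example shown in the text.

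The third and final step is simply to aggregate over $\bm{\lambda} \vdash n$. Performing the reordering above inside each isotypic $V_{\bm{\lambda}}$ (the different isotypics do not mix with one another, since $Q$ is already block-diagonal across partitions), we arrive at the form
\[
Q = \bigoplus_{\bm{\lambda} \vdash n} \bigoplus_{i=1}^{n_{\bm{\lambda}}} Q_{\bm{\lambda}},
\]
as claimed. No step here is a real obstacle: once the block structure from Theorem~3.2.1 is in hand, what remains is a purely combinatorial basis relabeling, and the main point worth being careful about is matching indices correctly (in particular, keeping straight that the outer multiplicity becomes $n_{\bm{\lambda}}$ while the block size becomes $m_{\bm{\lambda}}$, the reverse of the roles these integers played before the reordering).
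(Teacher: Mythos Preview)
Your proposal is correct and follows exactly the approach the paper itself indicates: the paper does not give a formal proof of this theorem (it is cited from \cite{FaesslerStiefelBook}), but the discussion and $6\times 6$ example immediately preceding the statement outline precisely the two steps you carry out---apply the commutant-structure theorem (Theorem~\ref{thm:block structure of commutants}) and then permute the basis from ``copy-major'' to ``slot-major'' order. One small note: what you call ``Theorem~3.2.1'' is Theorem~\ref{thm:block structure of commutants} in the paper's numbering; otherwise your write-up is a clean formalization of the paper's sketch.
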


Note that the structure of $Q$ in \eqref{eq:GP block structure matrix form} is doubly block-diagonal similar to \eqref{eq:SA representing matrices}.
The matrix $Q_{\bm{\lambda}}$ has size $m_{\bm{\lambda}} \times m_{\bm{\lambda}}$. Also, since the algorithm creates a bijection between the matrices $Q_{\bm{\lambda}}$  in \eqref{eq:GP block structure matrix form} and the $n_{\bm{\lambda}}$ standard tableaux $\tau_{\bm{\lambda}}^1, \ldots, \tau_{\bm{\lambda}}^{n_{\bm{\lambda}}}$, we may rewrite
\eqref{eq:GP block structure matrix form} as
\begin{align} \label{eq:GP block structure matrix form v2}
Q = \oplus_{\bm{\lambda} \vdash n} \oplus_{\tau_{\bm{\lambda}}} Q_{\bm{\lambda}}.
\end{align}

For the particular ordering of the symmetry-adapted basis required to block-diagonalize $Q$, we refer the reader to
\cite[pp 40]{FaesslerStiefelBook}. As a consequence of the reordering we get a different direct sum decomposition of each isotypic $V_{\bm{\lambda}}$ in
\eqref{eq:irreducible decomposition}, indexed by the standard tableaux  of shape $\bm{\lambda}$:
\begin{align}\label{eq:alternate-decomposition}
V_{\bm{\lambda}} = \oplus_{i=1}^{n_{\bm{\lambda}}} W_{\tau_{\bm{\lambda}}^i} \,\,\,\textup{ leading to }
V = \oplus_{\bm{\lambda} \vdash n} \oplus_{i=1}^{n_{\bm{\lambda}}} W_{\tau_{\bm{\lambda}}^i}.
\end{align}

Note that this vector space decomposition is not a $\mathfrak{S}_n$-module decomposition. For more information on the vector spaces $W_{\tau_{\bm{\lambda}}}$, see Section 2 and Appendix A of \cite{RSST}.

\subsection{Sum of squares for an invariant polynomial} \label{sec:GP method}
 The main message of \cite{GatermannParrilo} is that the computation of sos certificates of degree at most $d$ for an invariant polynomial can be helped greatly by  the irreducible decomposition \eqref{eq:irreducible decomposition} and the block structures discussed above. We present their strategy in our setting.

A representation $\vartheta$ of $\mathfrak{S}_n$ on $\RR[\mathbf{x}]_{\leq d}$ induces a representation of
$\mathfrak{S}_n$ on $\mathcal{S}^D$, the vector space of $D \times D$ real symmetric matrices as follows:
\begin{align} \label{eq:action on matrices}
 \mathfrak{s}  X := \vartheta(\mathfrak{s})^\top X \vartheta(\mathfrak{s}) \,\,\,\,\forall \,\,\, \mathfrak{s} \in \mathfrak{S}_n, X \in \mathcal{S}^D.
 \end{align}
Here we identity $\vartheta(\mathfrak{s})$ with a matrix representation of it, and assume that these matrices are all orthonormal. This action preserves the cone of psd
matrices. The set of all invariant matrices under the action \eqref{eq:action on matrices} are those of the form
$X = \vartheta(\mathfrak{s})^\top X \vartheta(\mathfrak{s})$ for all $\mathfrak{s} \in \mathfrak{S}_n$. Note that these are precisely the matrices that commute with every $\vartheta(\mathfrak{s})$ and hence they can be block-diagonalized as in  \eqref{eq:GP block structure matrix form v2}.  An easy way to construct an invariant matrix from any matrix $X \in \mathcal{S}^D$ is to pass to its {\em symmetrization}
$\bar{X} := \sum_{\mathfrak{s} \in \mathfrak{S}_n} \vartheta(\mathfrak{s})^\top X \vartheta(\mathfrak{s})$. If $X$ is psd then so is $\bar{X}$.

Suppose we are given a $\mathfrak{S}_n$-invariant polynomial $\mathsf{f}(\mathbf{x}) \in \RR[{\bf x}]$ for which we wish to find a $d$-sos representation modulo an ideal $\mathscr{I} \subset \RR[\mathbf{x}]$ that is also
$\mathfrak{S}_n$-invariant. Let $[\mathbf{x}]_{\leq d}$ denote the vector of monomials in $\RR[\mathbf{x}]_{\leq d}$. The polynomial $\mathsf{f}(\mathbf{x})$ is $d$-sos modulo  $\mathscr{I}$ if and only if there exists a psd matrix $Q$ such that
\begin{align*} \label{eq:sos form}
\mathsf{f}(\mathbf{x}) \equiv [\mathbf{x}]_{\leq d}^\top Q [\mathbf{x}]_{\leq d} \,\,\,\textup{ mod } \mathscr{I} \,\,\,:\Leftrightarrow \,\,\, \mathsf{f}(\mathbf{x}) - \mathsf{h} = [\mathbf{x}]_{\leq d}^\top Q [\mathbf{x}]_{\leq d} \textup{ for some } \mathsf{h} \in \mathscr{I}.
\end{align*}
Since $\mathsf{f}$ and $\mathscr{I}$ are both $\mathfrak{S}_n$-invariant, we may symmetrize both sides of the above equation to get another sos expression for $\mathsf{f}$ mod $\mathscr{I}$. Therefore, we can assume that the sos on the right is $\mathfrak{S}_n$-invariant.
In other words, we may assume that the psd matrix $Q$ in
$[\mathbf{x}]_{\leq d}^\top Q [\mathbf{x}]_{\leq d}$ is symmetrized since symmetrizing the sos yields
\begin{align*}
\sum_{\mathfrak{s} \in \mathfrak{S}_n} (\vartheta(\mathfrak{s}) [\mathbf{x}]_{\leq d})^\top Q (\vartheta(\mathfrak{s})[\mathbf{x}]_{\leq d}) =
\sum_{\mathfrak{s} \in \mathfrak{S}_n} ([\mathbf{x}]_{\leq d})^\top \left( \vartheta(\mathfrak{s})^\top Q \vartheta(\mathfrak{s}) \right)[\mathbf{x}]_{\leq d}\\
 = ([\mathbf{x}]_{\leq d})^\top \left( \sum_{\mathfrak{s} \in \mathfrak{S}_n}  \vartheta(\mathfrak{s})^\top Q \vartheta(\mathfrak{s}) \right) [\mathbf{x}]_{\leq d}.
\end{align*}

Now suppose $M$ denotes the change of basis matrix from the monomial basis of $\RR[\mathbf{x}]_{\leq d}$ to a symmetry-adapted basis with respect to \eqref{eq:irreducible decomposition}.
Then $\mathsf{f}(\mathbf{x})$ is sos mod $\mathscr{I}$ if and only if there is an invariant psd matrix $Q$ such that
\begin{align*}
\mathsf{f}(\mathbf{x}) & \equiv  [\mathbf{x}]_{\leq d}^\top Q [\mathbf{x}]_{\leq d} \,\,\,\, \textup{ mod } \mathscr{I}\\
      & \equiv  [\mathbf{x}]_{\leq d}^\top M M^\top Q M M^\top [\mathbf{x}]_{\leq d} \,\,\,\, \textup{ mod } \mathscr{I}\\
      & \equiv  (M^\top[\mathbf{x}]_{\leq d})^\top (M^\top Q M) (M^\top [\mathbf{x}]_{\leq d}) \,\,\,\, \textup{ mod } \mathscr{I}\\
      & \equiv  \mathbf{y}^\top \tilde Q \mathbf{y} \,\,\,\, \textup{ mod } \mathscr{I}
\end{align*}
where $\mathbf{y} := M^\top [\mathbf{x}]_{\leq d}$ is the vector of elements in the symmetry-adapted basis of $V$. The matrix $\tilde{Q}=M^\top Q M$ has the block structure in \eqref{eq:GP block structure matrix form v2} since $Q$ is invariant and $M^{\top} Q M$ is precisely the transformation that block diagonalizes $Q$.
The components of $\mathbf{y}$ will be referred to as {\em (symmetry-adapted) basis polynomials}. The block structure of $\tilde{Q}$ endows a block structure on $\mathbf{y}$, with big blocks
indexed by the partitions $\bm{\lambda}$ of $n$, and each block $\mathbf{y}_{\bm{\lambda}}$ broken further into blocks indexed by the standard tableaux of shape $\bm{\lambda}$.

For simplicity, we rename $\tilde{Q}$ by $Q$ and rewrite the sos expression for $\mathsf{f}$
using \eqref{eq:GP block structure matrix form v2}. Thus

\begin{align}
\mathsf{f}(\mathbf{x})
&\equiv \mathbf{y}^\top Q {\mathbf{y}}
\equiv \sum_{\bm{\lambda}} \sum_{\tau_{\bm{\lambda}}} {\mathbf{y}}_{\tau_{\bm{\lambda}}}^\top  {Q}_{\bm{\lambda}} {\mathbf{y}}_{\tau_{\bm{\lambda}}}  \\
&\equiv \sum_{\bm{\lambda}} \sum_{\tau_{\bm{\lambda}}} \langle {Q}_{\bm{\lambda}},  {\mathbf{y}}_{\tau_{\bm{\lambda}}}  {\mathbf{y}}_{\tau_{\bm{\lambda}}}^\top  \rangle \equiv \sum_{\bm{\lambda}} n_{\bm{\lambda}} \langle {Q}_{\bm{\lambda}},   {Y}_{\bm{\lambda}}  \rangle \,\,\,\,\textup{ mod } \mathscr{I} \label{onlyonetableau}
\end{align}
where ${Q}_{\bm{\lambda}}$ is an unknown psd matrix of
 size $m_{\bm{\lambda}} \times m_{\bm{\lambda}}$, and
 $${Y}_{\bm{\lambda}} := \frac{1}{|\mathfrak{S}_n|}\sum_{\mathfrak{s}\in \mathfrak{S}_n} \mathfrak{s}({\mathbf{y}}_{\tau_{\bm{\lambda}}'} {\mathbf{y}}_{\tau_{\bm{\lambda}}'}^\top)$$ is a matrix of the same size where $\tau_{\bm{\lambda}}'$ is any tableau of shape $\bm{\lambda}$ (symmetrization here is possible since $\mathsf{f}$ is $\mathfrak{S}_n$-invariant).  The choice of tableaux $\tau_{\bm{\lambda}}'$ does not affect $Y_{\bm{\lambda}}$. For further explanations, see Appendix A of \cite{RSST}.
 Therefore, in order to check if $\mathsf{f}$ is a sos of the above type, we need to search for a psd matrix ${Q}_{\bm{\lambda}}$ of size $m_{\bm{\lambda}}$ for each $\bm{\lambda}$ such that the linear equations that come from equating
 $\mathsf{f}$ to the sos expression \eqref{onlyonetableau} hold. For details on how to set up this SDP, we refer the reader to
 \cite[Chapter 7.2.1]{BPTSIAMBook}.

It might be possible to certify the nonnegativity of $\mathsf{f}({\bf x})$ by using only a subset
$\Lambda$ of the partitions of $n$.  Since we will rely on such subsets in the next two sections, we make a formal definition to say precisely what we mean.

\begin{definition}\label{def:restrictedGP}
We say that a sos expression for $\mathsf{f}$ modulo the ideal $\mathscr{I}$ can be obtained through the Gatermann-Parrilo SDP {\em restricted to partitions in} $\Lambda$ if there exists psd matrices $Q_{\bm{\lambda}}$ such that
\begin{equation}\label{eq:restrictedGP}
\mathsf{f}(\mathbf{x}) \equiv \sum_{\bm{\lambda} \in \Lambda} \langle {Q}_{\bm{\lambda}},   Y_{\bm{\lambda}}
\rangle \,\,\,\,\textup{ mod } \mathscr{I}.
\end{equation}

\end{definition}

\section{Main Results} \label{sec:main results}

In this section, we establish the connection between the sos certificate \eqref{razborovsosshape} obtained from the flag algebra method to those that can be obtained from the Gatermann-Parrilo symmetry-adapted SDP described in the previous section. In particular, we show that this sos certificate can be obtained from a SDP restricted to a fixed number of \textup{known} partitions as in Definition \ref{def:restrictedGP} above.

Note that the sos certificate \eqref{razborovsosshape} is a nested sos (sums of sums of squares really)
which makes it cumbersome to work with. Therefore, we work with the inner sums of squares in \eqref{razborovsosshape} whenever possible. In Theorem~\ref{lcorbits} and Corollary \ref{cor:orbitpolyvectorspace}, we use the  innermost sos by fixing some $\theta, \sigma$ and $l$, namely

\begin{equation}\label{deepsos}
{\mathbf{p}^{\theta, \sigma, l}}^\top Q_{\sigma, l} \mathbf{p}^{\theta, \sigma, l}=:\sum_{j} \mathsf{r}_j^2,
\end{equation}
to prove that each $\mathsf{r}_j$ is invariant under a particular subgroup of $\mathfrak{S}_n$, and as such, lies in the direct sum of finitely many $V_{\bm{\mu}}$ which we explicitly describe.

In Theorem~\ref{cor:sos from same isotypics}, we need the sos \eqref{razborovsosshape} obtained from fixing only $\sigma$ and $l$ and which, with a slight abuse of notation, we denote by
\begin{equation}\label{middlesos}
\sum_{\theta}\sum_{j} \mathsf{r}_{\theta,j}^2 := \mathbb{E}_{\theta}{\mathbf{p}^{\theta, \sigma, l}}^\top Q_{\sigma,l} \mathbf{p}^{\theta, \sigma, l}.
\end{equation}

We prove that such a sos can be retrieved through the Gatermann-Parrilo method restricted to partitions said to be lexicographically greater or equal to $(n-k,1^k)$ where $k$ is the size of $\sigma$ (see definition below).

Corollary \ref{wholething} involves the conic combination of \eqref{razborovsosshape} for problems where several types $\sigma$ and flags of different size are necessary. It shows that this whole flag sos can also be retrieved through the Gatermann-Parrilo method restricted to partitions lexicographically greater or equal to $(n-k^*,1^{k^*})$ where $k^*$ is the maximum size among all types present. A key feature is that the number of such partitions is independent of $n$.

Throughout this section, we assume that $V=\RR[\mathbf{x}]_{\leq d}$ is such that $d$ is at least as big as the maximum number of edges in the flags considered in the conic combination of sos expressions of the form \eqref{razborovsosshape}.

\subsection{Invariance of density polynomials} A \emph{partition} of $n$ is a way of writing $n$ as a sum of positive integers; each summand is called a \emph{part}. We denote a partition $\bm{\lambda}$ by a vector containing the parts $\lambda_i$ in non-decreasing order, i.e., $(\lambda_1, \ldots, \lambda_t)$ such that $\lambda_1\geq \ldots\geq \lambda_t >0$ and $\lambda_1+\ldots+\lambda_t=n$. A key partition for us is the \emph{hook partition} for which all the parts but one are $1$; we denote the hook partition with $k+1$ parts as $(n-k,1^k)$. There is a lexicographic order on the partitions of $n$; for $\bm{\lambda}=(\lambda_1, \ldots, \lambda_{t_1})$ and $\bm{\mu}=(\mu_1,\ldots, \mu_{t_2})$, we write $\bm{\lambda}\geq_{\textup{lex}} \bm{\mu}$ if the vector $(\lambda_1, \ldots, \lambda_{t_1})$ is lexicographically greater than or equal to the vector $(\mu_1, \ldots, \mu_{t_2})$. A partition $\bm{\lambda}$ has a shape (Young diagram) with rows of size $\lambda_1 \geq \lambda_2 \geq \cdots \geq \lambda_t$. A {\em tableau} of shape $\bm{\lambda}$, denoted as $\tau_{\bm{\lambda}}$,  is a filling of the boxes in the diagram of ${\bf \lambda}$ by the numbers $1, \ldots,n$. The tableau is {\em standard} if the numbering increases along each row and column. The {\em row group} of a tableau $\tau_{\bm{\lambda}}$ is the subgroup of  $\mathfrak{S}_n$
defined as
$$\mathfrak{R}_{\tau_{\bm{\lambda}}} := \{ \mathfrak{s} \in \mathfrak{S}_n: \mathfrak{s} \textup{ fixes the set of numbers in each row of } \tau_{\bm{\lambda}}\}.$$
Note that this group is isomorphic to $\mathfrak{S}_{\bm{\lambda}} := \mathfrak{S}_{\lambda_1} \times \cdots \times \mathfrak{S}_{\lambda_t}$.

We will show that the polynomials $\mathsf{p}_F^\theta$ from \eqref{dthetaf} are invariant under a particular row group for all flags $F\in \mathcal{F}_l^\sigma$.  Recall that these polynomials were defined
from the choice of a $\sigma$-flag $F$ of size $l$, and an injective map $\theta\in\textup{Inj}([k],[n])$. In particular, no partitions, tableaux or row groups were involved. We first present an example.

\begin{example} \label{exorbit_part2}
Consider $n=5$ and the hook partition $\bm{\lambda}=(2,1,1,1)$. The row group of the tableau
$$\tau_{\bm{\lambda}}=\young(45,1,2,3) \,\,\,\,\textup{ is }\,\,\,\,
\mathfrak{R}_{\tau_{\bm{\lambda}}}=\{1, (4,5)\}.$$

Suppose we choose the type $\sigma = \labeledcherries{1}{2}{3}$, set $l=4$, and  consider the $\sigma$-flag
$F =$ \labeledclaw{$1$}{$2$}{$3$}{}. Since $n=5$, assume $\theta \,:\, [3] \rightarrow [5]$
is such that $\theta(1)=1$, $\theta(2)=2$ and $\theta(3)=3$.
The set $\textup{Inj}_\theta(V(F),[5])$ contains two maps, both preserving $\theta$ and hence, sending $1,2,3$ to themselves. Suppose $\alpha_1 \in \textup{Inj}_\theta(V(F),[5])$ sends
the unlabeled vertex in $F$ to $4$ and $\alpha_2 \in \textup{Inj}_\theta(V(F),[5])$ sends it to $5$.

Using the formula in \eqref{eq:qhf}, one can see that
\begin{align*}
&\mathsf{q}^{\alpha_1}_F = \mathsf{x}_{12}\mathsf{x}_{13}\mathsf{x}_{14}(1-\mathsf{x}_{23})(1-\mathsf{x}_{24})(1-\mathsf{x}_{34}), \textup{ and }\\
&\mathsf{q}^{\alpha_2}_F = \mathsf{x}_{12}\mathsf{x}_{13}\mathsf{x}_{15}(1-\mathsf{x}_{23})(1-\mathsf{x}_{25})(1-\mathsf{x}_{35}).
\end{align*}
Similarly, from \eqref{dthetaf} we get that
$$\mathsf{p}_F^\theta = \frac{1}{|\textup{Inj}_\theta(V(F),[n])|}\sum_{\alpha \in\textup{Inj}_\theta(V(F),[n])} \mathsf{q}^\alpha_F= \frac{1}{2} (\mathsf{q}^{\alpha_1}_F  + \mathsf{q}^{\alpha_2}_F).$$
Note that $\mathsf{p}_F^\theta$ is invariant under $\mathfrak{R}_{\tau_{\bm{\lambda}}}$. Indeed, the action $1\in \mathfrak{R}_{\tau_{\bm{\lambda}}}$ applied to $\mathsf{p}_F^\theta$ doesn't change anything, and the action $(4,5)\in \mathfrak{R}_{\tau_{\bm{\lambda}}}$ sends $\mathsf{q}^{\alpha_1}_F$ to $\mathsf{q}^{\alpha_2}_F$ and vice-versa, thus leaving $\mathsf{p}_F^\theta$ unchanged.
\end{example}

We now prove that this observation is not accidental;
a polynomial $\pp^{\theta}_F$ is $\mathfrak{R}_{\tau_{\bm{\lambda}}}$-invariant for some tableau $\tau_{\bm{\lambda}}$ of shape ${\bm \lambda}$ where $\bm{\lambda}$ is a specific hook partition.

\begin{proposition}\label{pvsorbit}
Let $F$ be a $\sigma$-flag where $|\sigma|=k$, and $\theta:[k]\rightarrow [n]$ be an injective map. Consider the hook partition $\bm{\lambda}=(n-k,1^k)$ and a tableau $\tau_{\bm{\lambda}}$ where we fill the first row by numbers from $[n]\setminus\{\theta(i):i\in [k]\}$ in any order, and the $k$ remaining rows of size one with numbers from $\{\theta(i):i\in [k]\}$ in any order.  Then the polynomial $\pp^{\theta}_F$ is $\mathfrak{R}_{\tau_{\bm{\lambda}}}$-invariant.
\end{proposition}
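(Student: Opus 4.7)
The plan is to unpack the structure of $\mathfrak{R}_{\tau_{\bm{\lambda}}}$ for this particular hook tableau, and then show that its action simply permutes the terms inside the sum defining $\pd^\Theta_F$. Because the $t$ singleton rows of $\tau_{\bm{\lambda}}$ contain precisely the numbers in $\Theta([t])$, any $\mathfrak{s} \in \mathfrak{R}_{\tau_{\bm{\lambda}}}$ must fix each element of $\Theta([t])$ pointwise, and it may permute $[n]\setminus \Theta([t])$ arbitrarily. In other words, $\mathfrak{R}_{\tau_{\bm{\lambda}}} \cong \mathfrak{S}_{[n]\setminus \Theta([t])}$ under the natural inclusion into $\mathfrak{S}_n$.

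Next, I would compute the effect of $\mathfrak{s} \in \mathfrak{R}_{\tau_{\bm{\lambda}}}$ on a single term $\pp^h_F$. Using the definition in \eqref{phf} and the action $\mathfrak{s}\mathsf{x}_{ij} = \mathsf{x}_{\mathfrak{s}(i)\mathfrak{s}(j)}$, one sees immediately that $\mathfrak{s}\,\pp^h_F = \pp^{\mathfrak{s}\circ h}_F$, i.e.\ the action on polynomials transports along the injection $h$. The key claim is then that the assignment $h \mapsto \mathfrak{s}\circ h$ is a bijection of $\textup{Inj}_\Theta(V(F),[n])$ onto itself. Injectivity of $\mathfrak{s}\circ h$ is clear since both $\mathfrak{s}$ and $h$ are injective. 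For the respect-of-$\Theta$ condition, if $v\in V(F)$ is labeled $i\in [t]$ then $h(v)=\Theta(i)$, and since $\Theta(i)$ sits in a singleton row of $\tau_{\bm{\lambda}}$, we have $\mathfrak{s}(\Theta(i))=\Theta(i)$, so $(\mathfrak{s}\circ h)(v)=\Theta(i)$ as required. Invertibility of $\mathfrak{s}$ gives the bijection.

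Finally, applying $\mathfrak{s}$ to the definition
\begin{equation*}
\pd^\Theta_F \;=\; \frac{1}{|\textup{Inj}_\Theta(V(F),[n])|}\sum_{h\in \textup{Inj}_\Theta(V(F),[n])} \pp^h_F
\end{equation*}
yields a sum of $\pp^{\mathfrak{s}\circ h}_F$ over the same indexing set, which by the bijection above is just a reindexing of the original sum. Hence $\mathfrak{s}\,\pd^\Theta_F = \pd^\Theta_F$, establishing $\mathfrak{R}_{\tau_{\bm{\lambda}}}$-invariance.

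There is no real obstacle here; the proof is essentially bookkeeping. The only subtle point to state carefully is why the particular shape $(n-t,1^t)$ and the particular filling by $\Theta([t])$ in the singleton rows are exactly what is needed: singleton rows force pointwise fixing of $\Theta(i)$, which is precisely the condition built into $\textup{Inj}_\Theta(V(F),[n])$, while the long first row encodes the freedom we have to permute the unlabeled vertices whose images are averaged over in $\pd^\Theta_F$.
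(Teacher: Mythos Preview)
Your proof is correct and follows essentially the same approach as the paper: identify $\mathfrak{R}_{\tau_{\bm{\lambda}}}$ with the permutations fixing $\Theta([t])$ pointwise, observe that $\mathfrak{s}\cdot\pp^h_F=\pp^{\mathfrak{s}\circ h}_F$, and check that $h\mapsto\mathfrak{s}\circ h$ keeps one inside $\textup{Inj}_\Theta(V(F),[n])$. The only cosmetic difference is in the last step: the paper fixes $h$ and averages over $\mathfrak{s}\in\mathfrak{R}_{\tau_{\bm{\lambda}}}$ to exhibit $\pd^\Theta_F$ as a symmetrization (hence invariant), whereas you fix $\mathfrak{s}$ and use the bijection on $h$ to show $\mathfrak{s}\,\pd^\Theta_F=\pd^\Theta_F$ directly---these are trivially equivalent.
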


\begin{proof}
Consider any $\mathfrak{s}\in \mathfrak{R}_{\tau_{\bm{\lambda}}} \cong \mathfrak{S}_{n-k}$ and $\alpha:V(F)\rightarrow [n]$ in $\textup{Inj}_\theta(V(F),[n])$. Then $(\mathfrak{s}\circ \alpha):V(F)\rightarrow [n]$ is also an injective map preserving $\theta$. Indeed, for any $\alpha\in \textup{Inj}_\theta(V(F),[n])$, the composition gives a map from $\mathfrak{R}_{\tau_{\bm{\lambda}}}\rightarrow \textup{Inj}_\theta(V(F),[n])$ which is $\frac{|\mathfrak{R}_{\tau_{\bm{\lambda}}}|}{|\textup{Inj}_\theta(V(F),[n])|}$-to-one surjective. This is because there are $n-l$ elements of $[n]$ outside the range of $\alpha$ and
$\alpha = \mathfrak{s} \circ \alpha$ for any
$\mathfrak{s}\in \mathfrak{R}_{\tau_{\bm{\lambda}}}$ that fixes the range of $\alpha$. Since there are $$(n-l)! = \frac{(n-k)!}{(n-k)(n-k-1) \cdots (n-l+1)} = \frac{|\mathfrak{R}_{\tau_{\bm{\lambda}}}|}{|\textup{Inj}_\theta(V(F),[n])|}$$
such permutations $\mathfrak{s}$, we get that the $\mathfrak{R}_{\tau_{\bm{\lambda}}}$-invariant polynomial
\begin{align*}
\frac{1}{|\mathfrak{R}_{\tau_{\bm{\lambda}}}|} \sum_{\mathfrak{s}\in {\mathfrak{R}_{\tau_{\bm{\lambda}}}}} \mathfrak{s}\cdot\mathsf{q}_F^\alpha=\frac{1}{|\mathfrak{R}_{\tau_{\bm{\lambda}}}|} \cdot \frac{|\mathfrak{R}_{\tau_{\bm{\lambda}}}|}{|\textup{Inj}_\theta(V(F),[n])|}   \sum_{\alpha\in \textup{Inj}_\theta(V(F),[n])}\mathsf{q}_F^\alpha
=\pp_F^{\theta}
\end{align*}
by \eqref{dthetaf},
which implies that $\mathsf{p}_F^\theta$ is $\mathfrak{R}_{\tau_{\bm{\lambda}}}$-invariant.
\end{proof}

\begin{theorem}\label{lcorbits}
Each polynomial $\mathsf{r}_j$ in the sos \eqref{deepsos}
is invariant under the row group $\mathfrak{R}_{\tau_{\bm{\lambda}}}$ corresponding to the tableau $\tau_{\bm{\lambda}}$ and hook partition $\bm{\lambda}$ as in Proposition~\ref{pvsorbit}.
\end{theorem}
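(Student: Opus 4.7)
The plan is essentially to combine Proposition~\ref{pvsorbit} with the structure of the sos on the right-hand side of \eqref{deepsos}. First I would observe that, since $Q_{T,f}$ is positive semidefinite, we may write $Q_{T,f} = L^\top L$ for some real matrix $L$ (for example, via a Cholesky factorization, or equivalently $L = Q_{T,f}^{1/2}$ as noted in the last sentence of Proposition~\ref{cor:translation-sos}). Setting $\mathbf{r} := L \, \mathbf{d}^{\Theta, T, f}$, we get
\begin{equation*}
{\mathbf{d}^{\Theta, T, f}}^\top Q_{T,f}\, \mathbf{d}^{\Theta, T, f} \;=\; \mathbf{r}^\top \mathbf{r} \;=\; \sum_k \mathsf{r}_k^2,
\end{equation*}
and each $\mathsf{r}_k$ is the $k$-th entry of $L\,\mathbf{d}^{\Theta, T, f}$, hence an $\RR$-linear combination of the flag density polynomials $\{\pd^{\Theta}_F : F \in \mathcal{F}^f_T\}$.

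Next I would invoke Proposition~\ref{pvsorbit}, which asserts that for the hook partition $\bm{\lambda} = (n-t, 1^t)$ and the specific tableau $\tau_{\bm{\lambda}}$ whose first row is filled with $[n]\setminus \Theta([t])$ and whose singleton rows carry the labels $\Theta(1), \dots, \Theta(t)$, each $\pd^{\Theta}_F$ is invariant under the row group $\mathfrak{R}_{\tau_{\bm{\lambda}}}$. Since this row group acts linearly on $\RR[\mathbf{x}]$, the set of $\mathfrak{R}_{\tau_{\bm{\lambda}}}$-invariant polynomials is an $\RR$-linear subspace of $\RR[\mathbf{x}]$. Hence any linear combination of the $\pd^{\Theta}_F$ is itself $\mathfrak{R}_{\tau_{\bm{\lambda}}}$-invariant, and in particular each $\mathsf{r}_k$ is.

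There is essentially no obstacle here: the theorem is a direct corollary of Proposition~\ref{pvsorbit} once one notes that the components $\mathsf{r}_k$ produced by factoring $Q_{T,f}$ lie in the span of the flag density polynomials attached to the fixed triple $(\Theta, T, f)$. The one subtle point worth flagging in the write-up is that the choice of square root of $Q_{T,f}$ is not unique, and the particular $\mathsf{r}_k$'s depend on it; however, whichever factorization one picks, the components remain in the span of $\{\pd^{\Theta}_F\}$, and that span is entirely contained in the $\mathfrak{R}_{\tau_{\bm{\lambda}}}$-invariant subspace. Thus the invariance of the $\mathsf{r}_k$'s is independent of the factorization chosen, which is exactly the conclusion needed before the theorem can be used in Theorem~\ref{cor:sos from same isotypics} to locate the $\mathsf{r}_k$ inside a prescribed direct sum of isotypic components $V_{\bm{\mu}}$.
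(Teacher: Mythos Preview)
Your proof is correct and follows essentially the same approach as the paper: both argue that each $\mathsf{r}_k$ is a linear combination of the $\pd^{\Theta}_F$ (the paper simply asserts this from Proposition~\ref{cor:translation-sos}, while you make the factorization $Q_{T,f}=L^\top L$ explicit), and then invoke Proposition~\ref{pvsorbit} together with the fact that the tableau $\tau_{\bm{\lambda}}$ depends only on $\Theta$, so all $\pd^{\Theta}_F$ share the same invariance group. Your additional remark about the non-uniqueness of the factorization is a nice clarification but not strictly needed for the argument.
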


\proof Since each $\mathsf{r}_j$ is a linear combination of $\mathsf{p}_F^\theta$ with $F\in \mathcal{F}_{l}^\sigma$, and since $\tau_{\bm{\lambda}}$ only depends on $\theta$, all $\mathsf{p}_F^\theta$ for any $F\in \mathcal{F}_{l}^\sigma$ are invariant under the same $\mathfrak{R}_{\tau_{\bm{\lambda}}}$. Therefore, $\mathsf{r}_j$ is also $\mathfrak{R}_{\tau_{\bm{\lambda}}}$-invariant.
\qed

\begin{remark} Note that the hook $\bm{\lambda}$ in Proposition~\ref{pvsorbit} and Theorem~\ref{lcorbits}
only depends on $k$, the size of the type $\sigma$ of the flags $F$ and not on their size $l$.
\end{remark}

\subsection{$\mathfrak{R}_{\tau_{\bm{\lambda}}}$-invariant polynomials and the isotypic decomposition}\label{sec:razborov-gatermann-parrilo}

Our next goal is to show that a $\mathfrak{R}_{\tau_{\bm{\lambda}}}$-invariant polynomial $\mathsf{f}$ lies in the span of certain specific isotypics in the isotypic decomposition \eqref{eq:isotypic decomposition} of
$V = \mathbb{R}[\mathbf{x}]_{\leq d}$. We will use this in the next subsection to prove that Razborov's sos can be
obtained by restricting the Gatermann-Parrilo SDP to the subblocks indexed by these isotypics/partitions.

Recall that the induced $\mathfrak{S}_n$-action we have decomposes $V$ into a direct sum as in \eqref{eq:irreducible decomposition}. Therefore, our polynomial $\mathsf{f} \in V$ decomposes as
\begin{align}\label{eq:directsum}
\mathsf{f}=\sum_{\bm{\mu} \vdash n} \sum_{i=1}^{m_{\bm{\mu}}} \mathsf{f}_{\bm{\mu},i}
\end{align}
where $\mathsf{f}_{\bm{\mu},i} \in V_{\bm{\mu}}^i$.
Since $\mathsf{f}$ is $\mathfrak{R}_{\tau_{\lambda}}$-invariant, we may assume without loss of generality that each $\mathsf{f}_{\bm{\mu},i}$ in \eqref{eq:directsum} is also $\mathfrak{R}_{\tau_{\lambda}}$-invariant. Indeed,
$$
\mathsf{f}= \frac{1}{| \mathfrak{R}_{\tau_{\lambda}} |} \sum_{\mathfrak{s} \in \mathfrak{R}_{\tau_{\lambda}}} \mathfrak{s}  \mathsf{f} =
\sum_{\bm{\mu} \vdash n} \sum_{i=1}^{m_{\bm{\mu}}} \left(
\frac{1}{| \mathfrak{R}_{\tau_{\lambda}} |} \sum_{\mathfrak{s} \in \mathfrak{R}_{\tau_{\lambda}}} \mathfrak{s} \mathsf{f}_{\bm{\mu},i} \right),
$$
and since $V_{\bm{\mu}}^i$ is $\mathfrak{R}_{\tau_{\lambda}}$-invariant (because it is $\mathfrak{S}_n$-invariant),
$\frac{1}{| \mathfrak{R}_{\tau_{\lambda}} |} \sum_{\mathfrak{s} \in \mathfrak{R}_{\tau_{\lambda}}} \mathfrak{s}  \mathsf{f}_{\bm{\mu},i}$ lies in $V_{\bm{\mu}}^i$. In the case when $\mathsf{f}$ is some $\mathsf{r}_j$ in the sos \eqref{deepsos}, we are interested in knowing when a $\mathsf{f}_{\bm{\mu},i}$ is non-zero, or equivalently, in determining which parts of $V$ contain $\mathsf{f}$. For this, we rely on the theory of {\em restricted representations}
\cite[Chapter 4]{FultonHarrisBook}.

The restricted representation of the subgroup $\mathfrak{R}_{\tau_{\bm{\lambda}}}$ on $V$ is the
representation of $\mathfrak{R}_{\tau_{\bm{\lambda}}}$ on $V$ obtained by restricting the $\mathfrak{S}_n$ representation $\vartheta \,:\, \mathfrak{S}_n \rightarrow \textup{GL}(V)$ to the elements of $\mathfrak{R}_{\tau_{\bm{\lambda}}}$. This has the effect of refining the
direct sum decomposition in \eqref{eq:irreducible decomposition} since an irreducible $V_{\bm{\mu}}^i$,
while being $\mathfrak{R}_{\tau_{\bm{\lambda}}}$-invariant, may not be irreducible with respect to
$\mathfrak{R}_{\tau_{\bm{\lambda}}}$ and hence will decompose into irreducible representations of
$\mathfrak{R}_{\tau_{\bm{\lambda}}}$. The $\mathfrak{R}_{\tau_{\bm{\lambda}}}$-invariant polynomials in a
$V_{\bm{\mu}}^i$ are contained in the copies of the trivial representation of $\mathfrak{R}_{\tau_{\bm{\lambda}}}$ in $V_{\bm{\mu}}^i$. Therefore, the component $ \mathsf{f}_{\bm{\mu},i}$ in \eqref{eq:directsum} is nonzero only if $V_{\bm{\mu}}^i$ contains at least one copy of the trivial representation of $\mathfrak{R}_{\tau_{\bm{\lambda}}}$ in the restricted representation of $\mathfrak{R}_{\tau_{\bm{\lambda}}}$ on $V_{\bm{\mu}}^i$. To characterize such partitions $\bm{\mu}$, we need the following definition. Let $\bm{\lambda}=(\lambda_1,\lambda_2,\ldots, \lambda_t)$ be a partition and $N_{\bm{\lambda}}$ be the sequence containing $\lambda_p$ copies of the number $p$ for $p =1,\ldots,t$. A semistandard tableau of shape $\bm{\mu}$ and type $\bm{\lambda}$ is a tableau of $\bm{\mu}$ with numbers coming from $N_{\bm{\lambda}}$ such that the numbers are non-decreasing along rows and increasing along columns.

\begin{example}
If $\bm{\lambda}=(4,2,1)$, then $N_{\bm{\lambda}}=(1,1,1,1,2,2,3)$ and the semistandard tableaux of type $\bm{\lambda}$ are

\begin{center}
\young(1111223) \quad \young(111122,3) \quad \young(111123,2)

\vspace{0.2cm}

\quad \young(11112,23) \quad \young(11113,22) \quad \young(11112,2,3)

\vspace{0.2cm}

\quad \young(1111,223)\quad \young(1111,22,3)
\end{center}

Among these, there is one of shape $(7)$, two of shape $(6,1)$, two of shape $(5,2)$, one of shape $(5,1,1)$, one of shape $(4,3)$, and one of shape $(4,2,1)$.
\hfill{\qed}
\end{example}

The following theorem presented in many books (including in Corollary 4.39 and the discussion following it in \cite{FultonHarrisBook}) is exactly what we need.

\begin{theorem}[Young's Rule]\label{YoungsRule}
Let $\bm{\lambda}=(\lambda_1,\ldots, \lambda_t)$ and $\bm{\mu}$ be partitions of $n$. The multiplicity of the trivial representation of $\mathfrak{R}_{\tau_{\bm{\lambda}}}$ in the restriction of some irreducible representation $V^i_{\bm{\mu}}$ to
$\mathfrak{R}_{\tau_{\bm{\lambda}}}$
is equal to the number of semistandard tableaux of shape $\bm{\mu}$ and type $\bm{\lambda}$.
\end{theorem}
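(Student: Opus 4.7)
The plan is to derive the claim from Frobenius reciprocity by recognizing the induced representation as a classical permutation module whose decomposition into Specht modules is governed by Kostka numbers. First, I would use the identification $\mathfrak{R}_{\tau_{\bm{\lambda}}}\cong \mathfrak{S}_{\bm{\lambda}}=\mathfrak{S}_{\lambda_1}\times\cdots\times\mathfrak{S}_{\lambda_k}$ noted in Section~4.1, and let $\mathbf{1}$ denote the trivial representation of this Young subgroup. The quantity we want is
\[
\dim\mathrm{Hom}_{\mathfrak{R}_{\tau_{\bm{\lambda}}}}\bigl(\mathbf{1},\,\mathrm{Res}^{\mathfrak{S}_n}_{\mathfrak{R}_{\tau_{\bm{\lambda}}}}V^i_{\bm{\mu}}\bigr),
\]
and Frobenius reciprocity rewrites this as
\[
\dim\mathrm{Hom}_{\mathfrak{S}_n}\bigl(\mathrm{Ind}^{\mathfrak{S}_n}_{\mathfrak{R}_{\tau_{\bm{\lambda}}}}\mathbf{1},\,V^i_{\bm{\mu}}\bigr).
\]

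Second, I would identify the induced representation. The module $\mathrm{Ind}^{\mathfrak{S}_n}_{\mathfrak{R}_{\tau_{\bm{\lambda}}}}\mathbf{1}$ has a basis indexed by the cosets $\mathfrak{S}_n/\mathfrak{R}_{\tau_{\bm{\lambda}}}$, which in turn are in bijection with the ordered set-partitions of $[n]$ into blocks of sizes $\lambda_1,\ldots,\lambda_k$ (the tabloids of shape $\bm{\lambda}$). This is exactly the classical Young permutation module $M^{\bm{\lambda}}$, with $\mathfrak{S}_n$ acting by permuting the entries. Under this identification, the problem reduces to computing the multiplicity of the Specht module $S^{\bm{\mu}}\cong V^i_{\bm{\mu}}$ in $M^{\bm{\lambda}}$.

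Third, I would invoke the standard decomposition
\[
M^{\bm{\lambda}}\;\cong\;\bigoplus_{\bm{\mu}\vdash n} K_{\bm{\mu},\bm{\lambda}}\,S^{\bm{\mu}},
\]
where $K_{\bm{\mu},\bm{\lambda}}$ is the Kostka number counting semistandard Young tableaux of shape $\bm{\mu}$ and type (content) $\bm{\lambda}$. By Schur's lemma, applied to the irreducible $V^i_{\bm{\mu}}$, the Hom-space computed above has dimension exactly $K_{\bm{\mu},\bm{\lambda}}$, which is the quantity asserted in the theorem.

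The substantive combinatorial content, and the main obstacle, is the Kostka decomposition of $M^{\bm{\lambda}}$. Since this is a classical result (Corollary~4.39 in \cite{FultonHarrisBook}), the natural approach here is to cite it rather than reprove it; otherwise one would establish it either via polytabloids and the Garnir relations (constructing an explicit filtration of $M^{\bm{\lambda}}$ whose associated graded pieces are Specht modules indexed by semistandard fillings), or via characters using the Jacobi--Trudi identity to expand the Schur polynomial $s_{\bm{\mu}}$ in the monomial symmetric function basis with Kostka coefficients. Everything else in the proof, namely Frobenius reciprocity and the recognition of the coset module as $M^{\bm{\lambda}}$, is formal and requires no case analysis depending on whether $\bm{\lambda}$ is a hook or a more general partition.
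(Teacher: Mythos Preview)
Your proposal is correct and matches the paper's treatment: the paper does not prove Young's Rule but simply cites it from \cite[Corollary~4.39]{FultonHarrisBook}, remarking that the version there is stated for induced representations and that Frobenius reciprocity \cite[Corollary~3.20]{FultonHarrisBook} yields the restricted form. Your outline spells out exactly this translation (Frobenius reciprocity, identification of $\mathrm{Ind}^{\mathfrak{S}_n}_{\mathfrak{R}_{\tau_{\bm{\lambda}}}}\mathbf{1}$ with the permutation module $M^{\bm{\lambda}}$, and the Kostka decomposition), so there is nothing to add.
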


Note that in \cite{FultonHarrisBook}, this result is given in terms of {\em induced representations}, but by {\em Frobenius reciprocity} \cite[Corollary 3.20]{FultonHarrisBook}, the above is an equivalent version.

The number of semistandard tableaux of shape $\bm{\mu}$ and type $\bm{\lambda}$ is called  the  {\em Kostka number} $K_{\bm{\mu}\bm{\lambda}}$. We need to know when $K_{\bm{\mu}\bm{\lambda}}$ is non-zero, and
the following standard fact gives a necessary characterization.

\begin{lemma}\label{multzero}
If $\bm{\lambda}$ is lexicographically greater than $\bm{\mu}$ or if $\bm{\mu}$ has more parts than $\bm{\lambda}$, then $K_{\bm{\mu}\bm{\lambda}}=0$.
\end{lemma}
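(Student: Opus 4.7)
The plan is to handle the two cases separately, both by exploiting the strict increase along columns in a semistandard tableau.

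For the first case, suppose $\bm{\mu}=(\mu_1,\ldots,\mu_{k'})$ has more parts than $\bm{\lambda}=(\lambda_1,\ldots,\lambda_k)$, i.e.\ $k' > k$. The first column of the diagram of $\bm{\mu}$ contains $k'$ boxes, and a semistandard filling requires the entries down this column to be strictly increasing. But the entries are drawn from $N_{\bm{\lambda}}$, whose distinct values are $\{1,2,\ldots,k\}$. Since $k' > k$, there are not enough distinct values available, so no semistandard tableau of shape $\bm{\mu}$ and type $\bm{\lambda}$ exists, giving $K_{\bm{\mu}\bm{\lambda}}=0$.

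For the second case, suppose $\bm{\lambda} >_{\textup{lex}} \bm{\mu}$. The key intermediate claim is that in any semistandard tableau $T$ of shape $\bm{\mu}$, every entry of value at most $i$ must lie in one of the first $i$ rows. Indeed, if an entry of value $\leq i$ sits in row $r$ of some column, then the $r$ entries in that column from the top down to and including this one are strictly increasing and all $\leq i$, so they form $r$ distinct values in $\{1,\ldots,i\}$, forcing $r \leq i$. Consequently, the total number of entries of value at most $i$ in $T$ is bounded by the number of cells in the first $i$ rows of $\bm{\mu}$, yielding
\begin{equation*}
\sum_{j=1}^{i} \lambda_j \;\leq\; \sum_{j=1}^{i} \mu_j \quad \text{for every } i,
\end{equation*}
since a tableau of type $\bm{\lambda}$ contains exactly $\lambda_j$ copies of $j$.

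Finally, to derive a contradiction from $\bm{\lambda} >_{\textup{lex}} \bm{\mu}$, let $j$ be the smallest index with $\lambda_j \neq \mu_j$, so $\lambda_i=\mu_i$ for $i<j$ and $\lambda_j > \mu_j$. Then
\begin{equation*}
\sum_{i=1}^{j} \lambda_i \;=\; \sum_{i=1}^{j-1} \mu_i + \lambda_j \;>\; \sum_{i=1}^{j} \mu_i,
\end{equation*}
contradicting the inequality just established. Hence no semistandard tableau of shape $\bm{\mu}$ and type $\bm{\lambda}$ exists, and again $K_{\bm{\mu}\bm{\lambda}}=0$. The only substantive step is the column-counting observation controlling where small entries can appear; everything else is a routine summation argument, so I do not expect a genuine obstacle.
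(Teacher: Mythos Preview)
Your proof is correct. Both you and the paper handle the ``more parts'' case identically via the first column. For the lexicographic case the approaches diverge: the paper argues directly that when $\lambda_i=\mu_i$ for $i<i^*$, rows $1$ through $i^*-1$ of any semistandard filling are forced to consist entirely of the values $1,\ldots,i^*-1$ respectively, so there is no room in row $i^*$ for all $\lambda_{i^*}$ copies of $i^*$. You instead establish the general dominance inequality $\sum_{j\leq i}\lambda_j\leq\sum_{j\leq i}\mu_j$ for every $i$ (via the column-counting observation that entries $\leq i$ live in the first $i$ rows) and then note that $\bm{\lambda}>_{\textup{lex}}\bm{\mu}$ violates it at the first index of disagreement. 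Your route is the standard textbook argument and in fact proves the stronger intermediate statement that $K_{\bm{\mu}\bm{\lambda}}\neq 0$ forces $\bm{\mu}$ to dominate $\bm{\lambda}$ in the dominance partial order; the paper's argument is more hands-on and tailored specifically to the lexicographic hypothesis, but correspondingly requires no auxiliary inequality.
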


\proof
Suppose $\bm{\lambda}=(\lambda_1, \ldots, \lambda_{t_1})$ and $\bm{\mu}=(\mu_1, \ldots, \mu_{t_2})$. Note that since the numbers are increasing in the columns of any semistandard tableau of shape $\bm{\mu}$ and type $\bm{\lambda}$, the minimum number in row $i$ is $i$.

Suppose $\lambda_i=\mu_i$ for all $1\leq i < i^*$ and $\lambda_{i^*}>\mu_{i^*}$, i.e., $\bm{\lambda}$ is lexicographically greater than $\bm{\mu}$. Then any semistandard tableau of shape $\bm{\mu}$ and type $\bm{\lambda}$ will have to have $\lambda_1$ 1's in row 1, $\ldots$, and $\lambda_{i^*-1}$ $(i^*-1)$'s in row $i^*-1$ in order to have increasing numbers along the columns. Then one would attempt to put $\lambda_{i^*}$ $i^*$'s in row $i^*$, but that is not possible since $\mu_{i^*}<\lambda_{i^*}$, and so at least one $i^*$ will have to be in row $i^*+1$, meaning that we don't have a semistandard tableau since the columns are not strictly increasing.

Now suppose that $\bm{\mu}$ has more parts than $\bm{\lambda}$, i.e., $t_2>t_1$. Look at the first column of a semistandard tableau: there needs to be at least $t_2$ distinct numbers in it, but $N_{\bm{\lambda}}$ contains only $t_1$ distinct numbers and so there is no semistandard tableau.
\qed

\begin{definition} \label{def:unrhd}
We define the ordering $\unrhd$ on partitions as follows:
$\bm{\mu} \unrhd \bm{\lambda}$ if $\bm{\mu}$ is lexicographically greater than or equal to $\bm{\lambda}$ and has at most as many parts as $\bm{\lambda}$.
\end{definition}

Using Theorem \ref{YoungsRule} and Lemma~\ref{multzero}, we can determine which irreducibles in \eqref{eq:irreducible decomposition} contribute components to a given
$\mathfrak{R}_{\tau_{\bm{\lambda}}}$-invariant polynomial.

\begin{theorem} \label{thm:orbit polys in span of basis polys}
A $\mathfrak{R}_{\tau_{\bm{\lambda}}}$-invariant polynomial $\mathsf{f}$ in $V = \RR[\bf x]_{\leq d}$ lies in $\bigoplus_{\bm{\mu} \unrhd \bm{\lambda}} V_{\bm{\mu}}$.
\end{theorem}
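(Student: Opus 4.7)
The plan is to decompose $\mathsf{f}$ via the isotypic decomposition \eqref{eq:isotypic decomposition} and show that the components outside of $\bigoplus_{\bm{\mu}\unrhd\bm{\lambda}}V_{\bm{\mu}}$ must vanish, using Young's Rule together with Lemma~\ref{multzero}. Writing $\mathsf{f}=\sum_{\bm{\mu}\vdash n}\mathsf{f}_{\bm{\mu}}$ with $\mathsf{f}_{\bm{\mu}}\in V_{\bm{\mu}}$, the first step is to observe that because the isotypic decomposition is \emph{canonical} (and each $V_{\bm{\mu}}$ is an $\mathfrak{S}_n$-submodule, hence $\mathfrak{R}_{\tau_{\bm{\lambda}}}$-invariant), applying any $\mathfrak{s}\in\mathfrak{R}_{\tau_{\bm{\lambda}}}$ to $\mathsf{f}$ gives the isotypic decomposition of $\mathfrak{s}\mathsf{f}$ term by term. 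Thus $\mathfrak{R}_{\tau_{\bm{\lambda}}}$-invariance of $\mathsf{f}$ forces $\mathfrak{R}_{\tau_{\bm{\lambda}}}$-invariance of each $\mathsf{f}_{\bm{\mu}}$.

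Next, I reduce to showing that $\mathsf{f}_{\bm{\mu}}=0$ whenever $\bm{\mu}\not\unrhd\bm{\lambda}$. For this, I look at the space of $\mathfrak{R}_{\tau_{\bm{\lambda}}}$-invariants in $V_{\bm{\mu}}$. Writing $V_{\bm{\mu}}=\bigoplus_{i=1}^{m_{\bm{\mu}}}V_{\bm{\mu}}^{i}$ with $V_{\bm{\mu}}^{i}\cong S^{\bm{\mu}}$, this space of invariants decomposes as a direct sum, over $i$, of the fixed-point spaces of $\mathfrak{R}_{\tau_{\bm{\lambda}}}$ acting on each $V_{\bm{\mu}}^{i}$. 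The dimension of the fixed-point space equals the multiplicity of the trivial representation of $\mathfrak{R}_{\tau_{\bm{\lambda}}}$ in the restriction of $S^{\bm{\mu}}$ to $\mathfrak{R}_{\tau_{\bm{\lambda}}}$. By Young's Rule (Theorem~\ref{YoungsRule}), this multiplicity equals the Kostka number $K_{\bm{\mu}\bm{\lambda}}$.

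I then invoke Lemma~\ref{multzero}: if $\bm{\lambda}$ is lexicographically greater than $\bm{\mu}$, or if $\bm{\mu}$ has strictly more parts than $\bm{\lambda}$, then $K_{\bm{\mu}\bm{\lambda}}=0$. Together, these two conditions are precisely the negation of $\bm{\mu}\unrhd\bm{\lambda}$ as given in Definition~\ref{def:unrhd}. Hence, for any $\bm{\mu}\not\unrhd\bm{\lambda}$, the $\mathfrak{R}_{\tau_{\bm{\lambda}}}$-invariant subspace of $V_{\bm{\mu}}$ is trivial, forcing $\mathsf{f}_{\bm{\mu}}=0$. Consequently $\mathsf{f}=\sum_{\bm{\mu}\unrhd\bm{\lambda}}\mathsf{f}_{\bm{\mu}}\in\bigoplus_{\bm{\mu}\unrhd\bm{\lambda}}V_{\bm{\mu}}$, as desired.

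Since Young's Rule and Lemma~\ref{multzero} have already been established, the argument is essentially a direct assembly. The only mild subtlety, and the one place where care is needed, is the invariance step: within a single isotypic $V_{\bm{\mu}}$, the internal decomposition into irreducible submodules $V_{\bm{\mu}}^{i}$ is \emph{not} unique, so one cannot in general claim that each $\mathsf{f}_{\bm{\mu},i}$ is individually $\mathfrak{R}_{\tau_{\bm{\lambda}}}$-invariant. What is unique, and what I actually need, is the isotypic component $\mathsf{f}_{\bm{\mu}}$ itself, and invariance at that level is enough to run the Young's Rule argument on the \emph{whole} isotypic rather than piece by piece.
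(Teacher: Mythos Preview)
Your proof is correct and follows essentially the same route as the paper: decompose $\mathsf f$ along the isotypic decomposition, use Young's Rule (Theorem~\ref{YoungsRule}) to identify the $\mathfrak{R}_{\tau_{\bm\lambda}}$-fixed subspace in each piece with a Kostka number, and kill the unwanted components via Lemma~\ref{multzero}. The only minor difference is in the invariance step: the paper works at the level of the irreducible pieces $\mathsf f_{\bm\mu,i}$ and replaces each by its $\mathfrak{R}_{\tau_{\bm\lambda}}$-average (which stays in $V_{\bm\mu}^i$), whereas you work at the isotypic level and invoke uniqueness of the isotypic projection; both arguments are valid, and your remark about non-uniqueness of the internal splitting is well taken.
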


\proof
Since $\mathsf{f}$ is $\mathfrak{R}_{\tau_{\bm{\lambda}}}$-invariant, by Theorem \ref{YoungsRule}, the multiplicity of the trivial representation of $\mathfrak{R}_{\tau_{\bm{\lambda}}}$ in $V_{\bm{\mu}}^i$ restricted to $\mathfrak{R}_{\tau_{\bm{\lambda}}}$ is the number of semistandard tableaux of shape $\bm{\mu}$ and type $\bm{\lambda}$. By Lemma \ref{multzero}, this multiplicity is zero if $\bm{\mu}$ is lexicographically smaller than $\bm{\lambda}$ or if it has more parts than $\bm{\lambda}$. Therefore, $\mathsf{f}\in \bigoplus_{\bm{\mu} \unrhd \bm{\lambda}} V_{\bm{\mu}}$. \qed

Since $\mathsf{r}_j$ in $\sum_j \mathsf{r}_j^2$ from \eqref{deepsos}
is $\mathfrak{R}_{\tau_{\bm{\lambda}}}$-invariant for a tableau $\tau_{\bm{\lambda}}$ of shape $(n-k,1^k)$ by Theorem~\ref{lcorbits}, and since for the hook  $\bm{\lambda} = (n-k, 1^k)$, $\bm{\mu} \unrhd \bm{\lambda}$ if and only if $\bm{\mu} \geq_{\textup{lex}} \bm{\lambda}$, we get the following offshot.

\begin{corollary}\label{cor:orbitpolyvectorspace}
Any polynomial $\mathsf{r}_k \in V$ that contributes to the sos \eqref{deepsos} lies in
$\bigoplus_{\bm{\mu} \geq_{\textup{lex}} \bm{\lambda}} V_{\bm{\mu}}$ where
$\bm{\lambda} = (n-k, 1^k)$.
\end{corollary}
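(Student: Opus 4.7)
The plan is to chain the two preceding structural results. By Theorem~\ref{lcorbits}, each summand $\mathsf{r}_k$ in \eqref{deepsos} is invariant under $\mathfrak{R}_{\tau_{\bm{\lambda}}}$ for any tableau $\tau_{\bm{\lambda}}$ of the hook shape $\bm{\lambda}=(n-t,1^t)$ associated to the fixed intersection type $T$ of size $t$. Theorem~\ref{thm:orbit polys in span of basis polys} then immediately places $\mathsf{r}_k$ in $\bigoplus_{\bm{\mu}\unrhd \bm{\lambda}} V_{\bm{\mu}}$. Thus the only thing to verify is that, when $\bm{\lambda}$ is the hook $(n-t,1^t)$, the relation $\bm{\mu}\unrhd\bm{\lambda}$ of Definition~\ref{def:unrhd} collapses to the simpler condition $\bm{\mu}\geq_{\textup{lex}}\bm{\lambda}$, i.e., the ``at most as many parts'' clause becomes automatic.

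The key combinatorial step is a short case analysis on $\bm{\mu}=(\mu_1,\ldots,\mu_l)\vdash n$ with $\bm{\mu}\geq_{\textup{lex}}(n-t,1^t)$, showing $l\le t+1$. In the case $\mu_1>n-t$, the remaining parts sum to strictly less than $t$, each is a positive integer, so there are at most $t-1$ of them, yielding $l\le t$. In the case $\mu_1=n-t$, the tail $(\mu_2,\ldots,\mu_l)$ is a partition of $t$ that is lex $\ge(1^t)$; if every $\mu_i$ for $i\ge 2$ equals $1$, then $l=t+1$, and otherwise some $\mu_i\ge 2$ for $i\ge 2$, forcing $l-1\le t-1$, hence $l\le t$. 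Either way the number of parts of $\bm{\mu}$ does not exceed $t+1$, which is the number of parts of $\bm{\lambda}$. This is the only nontrivial content of the corollary.

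Combining these observations gives the statement: starting from $\mathsf{r}_k\in\bigoplus_{\bm{\mu}\unrhd\bm{\lambda}}V_{\bm{\mu}}$ and applying the equivalence $\unrhd\,\Leftrightarrow\,\geq_{\textup{lex}}$ valid for hook $\bm{\lambda}=(n-t,1^t)$, we conclude $\mathsf{r}_k\in\bigoplus_{\bm{\mu}\geq_{\textup{lex}}\bm{\lambda}}V_{\bm{\mu}}$. I do not expect any genuine obstacle here: the work has already been done in Theorems~\ref{lcorbits} and~\ref{thm:orbit polys in span of basis polys}, and the remaining step is the elementary counting argument above about how lexicographic dominance over a hook constrains the number of parts.
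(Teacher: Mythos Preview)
Your proof is correct and follows exactly the paper's approach: the paper derives the corollary from Theorem~\ref{lcorbits} and Theorem~\ref{thm:orbit polys in span of basis polys} together with the assertion that for the hook $\bm{\lambda}=(n-t,1^t)$ one has $\bm{\mu}\unrhd\bm{\lambda}$ if and only if $\bm{\mu}\geq_{\textup{lex}}\bm{\lambda}$, and you simply supply the elementary case analysis verifying this equivalence that the paper leaves to the reader. (As a minor remark, the literal statement of the corollary only needs the trivial implication $\bm{\mu}\unrhd\bm{\lambda}\Rightarrow\bm{\mu}\geq_{\textup{lex}}\bm{\lambda}$, since $\bigoplus_{\bm{\mu}\unrhd\bm{\lambda}}V_{\bm{\mu}}\subseteq\bigoplus_{\bm{\mu}\geq_{\textup{lex}}\bm{\lambda}}V_{\bm{\mu}}$; the converse direction you prove is what makes the corollary a genuine equivalence rather than a weakening, and is what the paper uses later to bound the number of relevant partitions.)
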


\subsection{From Razborov to Gatermann-Parrilo}
We are now ready for the final step. We will show that a sos arising from flag algebras as in Proposition \ref{cor:translation-sos} can be retrieved from the Gatermann-Parrilo SDP by restricting to certain blocks in the sense of
Definition \ref{def:restrictedGP}.  We first define formally the concept of symmetrization,
which we already saw a few times.

\begin{definition}\label{def:symmetrization}
The \emph{symmetrization} of a polynomial $\mathsf{f} \in \RR[{\bf x}]$ with respect to $\mathfrak{S}_n$ is denoted by $\textup{sym}({\mathsf{f}})$ and defined to be

$$\textup{sym}({\mathsf{f}})=\frac{1}{n!}\sum_{\mathfrak{s}\in \mathfrak{S}_n} \mathfrak{s} \mathsf{f}.$$
\end{definition}

\begin{proposition}\label{sosrazvsgp}
 If some $\mathfrak{S}_n$-invariant sos $\sum_{j} \mathsf{f}_j^2$ is such that $\mathsf{f}_j\in V_{\bm{\lambda}_1}\oplus \ldots\oplus V_{\bm{\lambda}_s}$ for all $j$, then this sos can be obtained through the Gatermann-Parrilo SDP restricted to  $\bm{\lambda}_1, \ldots, \bm{\lambda}_s$.
\end{proposition}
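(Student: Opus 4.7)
The plan is to write $\sum_l \mathsf{f}_l^2$ as a quadratic form $\mathbf{y}^\top Q \mathbf{y}$ in the symmetry-adapted basis, symmetrize $Q$ so Theorem \ref{thm:Fundamental Theorem} applies, and then check that the resulting block-diagonal matrix can only have nonzero blocks at the partitions $\bm{\lambda}_1,\ldots,\bm{\lambda}_k$.

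First, let $\mathbf{y}$ denote the symmetry-adapted basis of $V = \RR[\mathbf{x}]_{\leq d}$, whose coordinates are grouped into blocks $\mathbf{y}_{\bm{\lambda}}$ indexed by the partitions $\bm{\lambda} \vdash n$. Since each summand $\mathsf{f}_l$ lies in $V_{\bm{\lambda}_1} \oplus \cdots \oplus V_{\bm{\lambda}_k}$, its coefficient vector $\mathbf{c}_l$ (defined by $\mathsf{f}_l = \mathbf{c}_l^\top \mathbf{y}$) is supported only in the coordinates corresponding to $\mathbf{y}_{\bm{\lambda}_1}, \ldots, \mathbf{y}_{\bm{\lambda}_k}$. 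Thus the Gram matrix $Q := \sum_l \mathbf{c}_l \mathbf{c}_l^\top$ is psd, represents the given sos via $\sum_l \mathsf{f}_l^2 = \mathbf{y}^\top Q \mathbf{y}$, and has all of its nonzero entries confined to the rows and columns belonging to these $k$ partition blocks, although a priori the $\bm{\lambda}_i$-$\bm{\lambda}_j$ cross blocks can be nonzero.

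Second, because $\sum_l \mathsf{f}_l^2$ is $\mathfrak{S}_n$-invariant, the symmetrization step from Section \ref{sec:GP method} lets us replace $Q$ by $\bar Q := \frac{1}{|\mathfrak{S}_n|}\sum_{\mathfrak{s} \in \mathfrak{S}_n} B_\mathfrak{s} Q B_\mathfrak{s}^\top$ without changing the polynomial $\mathbf{y}^\top \bar Q \mathbf{y}$. The matrix $\bar Q$ remains psd (as a convex combination of psd matrices) and is invariant under the action \eqref{eq:action on matrices}, so by Theorem \ref{thm:Fundamental Theorem} it admits a block-diagonal decomposition $\bar Q = \bigoplus_{\bm{\mu} \vdash n} \bigoplus_{\tau_{\bm{\mu}}} \bar Q_{\bm{\mu}}$. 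The essential observation is that each $B_\mathfrak{s}$ is itself block-diagonal with respect to the partition blocks (see \eqref{eq:SA representing matrices}), so the conjugation $Q \mapsto B_\mathfrak{s} Q B_\mathfrak{s}^\top$ preserves the partition-block support of $Q$; consequently, $\bar Q$ also has its support contained in the union of the $\bm{\lambda}_i$ positions. Combined with the block-diagonality by partitions from the Fundamental Theorem, this forces $\bar Q_{\bm{\mu}} = 0$ for every $\bm{\mu} \notin \{\bm{\lambda}_1, \ldots, \bm{\lambda}_k\}$.

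Finally, taking $Q_{\bm{\lambda}_i}$ to be the common $m_{\bm{\lambda}_i} \times m_{\bm{\lambda}_i}$ matrix that repeats within the surviving $\bar Q_{\bm{\lambda}_i}$ and absorbing normalization constants as in the passage to \eqref{onlyonetableau}, we obtain psd matrices $Q_{\bm{\lambda}_1}, \ldots, Q_{\bm{\lambda}_k}$ with $\sum_l \mathsf{f}_l^2 = \sum_{i=1}^k \langle Q_{\bm{\lambda}_i}, Y_{\bm{\lambda}_i} \rangle$, which is exactly the form of the Gatermann-Parrilo SDP restricted to $\bm{\lambda}_1, \ldots, \bm{\lambda}_k$ from Definition \ref{def:restrictedGP}. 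The main technical obstacle is this middle step of reconciling the raw partition-block support of $Q$ inherited from the $\mathsf{f}_l$ with the refined block-diagonal form prescribed by the Fundamental Theorem; it resolves cleanly precisely because the $B_\mathfrak{s}$ respect the coarse isotypic decomposition, ensuring that symmetrization cannot spread support into partition blocks outside $\{\bm{\lambda}_1,\ldots,\bm{\lambda}_k\}$.
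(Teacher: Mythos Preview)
Your argument is correct, and it takes a genuinely different route from the paper's.

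The paper decomposes each $\mathsf{f}_l$ into its isotypic components $\mathsf{f}_l = \mathsf{f}_{l,1}+\cdots+\mathsf{f}_{l,k}$ with $\mathsf{f}_{l,i}\in V_{\bm{\lambda}_i}$, expands the squares, and then kills the cross terms $\textup{sym}(\mathsf{f}_{l,i}\mathsf{f}_{l,j})$ for $i\neq j$ by invoking the representation-theoretic fact (Exercise~4.51 in Fulton--Harris) that the trivial representation of $\mathfrak{S}_n$ does not occur in $S^{\bm{\lambda}_i}\otimes S^{\bm{\lambda}_j}$ when $\bm{\lambda}_i\neq\bm{\lambda}_j$. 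Your proof instead stays entirely inside the matrix machinery of Section~\ref{sec:GatermannParrilo}: you form the Gram matrix $Q$ in the symmetry-adapted basis, observe that its support is confined to the $\bm{\lambda}_1,\ldots,\bm{\lambda}_k$ rows and columns, and then note that because each $B_\mathfrak{s}$ is block-diagonal along the isotypic decomposition, symmetrization cannot leak support into new partition blocks; the Fundamental Theorem then forces the off-diagonal partition blocks to vanish. In effect, you are reproving the vanishing of the cross terms via Schur's lemma (hidden in Theorem~\ref{thm:Fundamental Theorem}) rather than via the tensor-product multiplicity statement. Your approach has the advantage of not introducing any new representation-theoretic input beyond what Section~\ref{sec:GatermannParrilo} already establishes, and it makes the connection to Definition~\ref{def:restrictedGP} explicit at the level of the Gram matrix; the paper's approach is slightly more conceptual in that it isolates exactly which character identity is doing the work.
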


\proof
Suppose $\sum_{j}\mathsf{f}_j^2$ is a sos such that
$\mathsf{f}_j=\mathsf{f}_{j,1} + \cdots + \mathsf{f}_{j,s}$ where
$\mathsf{f}_{j,i} \in V_{{\bm{\lambda}}_i}$ for $i=1,\ldots,s$ for all $j$.
Then
\begin{align} \label{expansion of rk^2}
\mathsf{f}_j^2= (\mathsf{f}_{j,1})^2+\ldots + (\mathsf{f}_{j,s})^2 +
2\sum_{i_1 < i_2} \mathsf{f}_{j,i_1} \mathsf{f}_{j,i_2}.
\end{align}

Since the sos $\sum_{j}\mathsf{f}_j^2$ is $\mathfrak{S}_n$-invariant, symmetrizing it with respect to $\mathfrak{S}_n$ leaves it unchanged. Therefore,
\begin{align*}
\sum_{j}\mathsf{f}_j^2&=\textup{sym}\left(\sum_{j}\mathsf{f}_j^2\right)=\sum_{j} \textup{sym} \left(\mathsf{f}_j^2\right)=\sum_{j} \textup{sym}\left((\mathsf{f}_{j,1}+\ldots +\mathsf{f}_{j,s})^2\right)\\
&=\sum_{j} \left(\sum_{i} \textup{sym} \left(\mathsf{f}_{j,i}^2\right) + 2\sum_{i_1<i_2} \textup{sym}\left(\mathsf{f}_{j,i_1}\mathsf{f}_{j,i_2}\right)\right).
\end{align*}

Note that $\mathsf{f}_{j,i_1}\mathsf{f}_{j,i_2} \in V_{\bm{\lambda}_{i_1}} \otimes V_{\bm{\lambda}_{i_2}}$. By Exercise 4.51 in \cite{FultonHarrisBook}, the trivial representation of $\mathfrak{S}_n$ is present in $V_{\bm{\lambda}_{i_1}}\otimes V_{\bm{\lambda}_{i_2}}$ once if $i_1=i_2$ and zero times otherwise.
 For $i_1 \neq i_2$, $\textup{sym}(\mathsf{f}_{j,i_1}\mathsf{f}_{j,i_2})$ is $\mathfrak{S}_n$-invariant as well by symmetrization and therefore, must be the zero vector.

Thus, $\mathsf{f}_j^2=\mathsf{f}_{j,1}^2+\ldots + \mathsf{f}_{j,s}^2$, so each square in $\sum_{j}\mathsf{f}_j^2$ can be obtained from restricting the Gatermann-Parrilo SDP to partitions $\bm{\lambda}_1, \ldots, \bm{\lambda}_s$.

\qed

\begin{theorem} \label{cor:sos from same isotypics}
The sos \eqref{middlesos} can be obtained by restricting the Gatermann-Parrilo SDP to the partitions $\bm{\mu}$ where $\bm{\mu}\geq_{\textup{lex}} (n-k, 1^k)$.
\end{theorem}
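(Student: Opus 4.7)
The proof assembles three ingredients already in hand: Corollary \ref{cor:orbitpolyvectorspace} (which locates each inner-sos summand inside a specific direct sum of isotypics), the observation that averaging over $\Theta$ makes the sos $\mathfrak{S}_n$-invariant, and Proposition \ref{sosrazvsgp} (which converts such an invariant sos, with summands confined to a union of isotypics, into a Gatermann-Parrilo sos restricted to the corresponding partitions).

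The plan is as follows. First, I would expand \eqref{middlesos} as a genuine sum of squares by writing
\[
\mathbb{E}_{\Theta}\,{\mathbf{d}^{\Theta, T, f}}^\top Q_{T,f}\, \mathbf{d}^{\Theta, T, f} \;=\; \frac{1}{|\textup{Inj}([t],[n])|}\sum_{\Theta}\sum_k \mathsf{r}_{\Theta,k}^2,
\]
where each $\mathsf{r}_{\Theta,k}$ is a linear combination of the entries of $\mathbf{d}^{\Theta,T,f}$ coming from the Cholesky-type factorization $Q_{T,f}^{1/2}\mathbf{d}^{\Theta,T,f}$, as in the last sentence of Proposition \ref{cor:translation-sos}. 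Second, I would invoke Theorem \ref{lcorbits}, which says that for each fixed $\Theta$, every $\mathsf{r}_{\Theta,k}$ is invariant under the row group $\mathfrak{R}_{\tau_{\bm{\lambda}}}$ of the hook tableau of shape $\bm{\lambda}=(n-t,1^t)$ associated with $\Theta$. Then Corollary \ref{cor:orbitpolyvectorspace} (which in turn rests on Young's rule and Lemma~\ref{multzero}) immediately gives
\[
\mathsf{r}_{\Theta,k}\;\in\;\bigoplus_{\bm{\mu}\,\geq_{\textup{lex}}\,(n-t,1^t)} V_{\bm{\mu}}
\]
for every $\Theta$ and every $k$. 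Note that the list of partitions on the right is the \emph{same} for all $\Theta$, since the shape of the hook depends only on $t$, not on the particular labeling $\Theta$.

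Third, I would verify that the sos \eqref{middlesos} is $\mathfrak{S}_n$-invariant. The key observation is that for $\mathfrak{s}\in\mathfrak{S}_n$, the action on variables $\mathsf{s}\mathsf{x}_{ij}=\mathsf{x}_{\mathfrak{s}(i)\mathfrak{s}(j)}$ sends $\mathsf{p}^h_F$ to $\mathsf{p}^{\mathfrak{s}\circ h}_F$, and hence $\mathfrak{s}\mathsf{d}^\Theta_F = \mathsf{d}^{\mathfrak{s}\circ\Theta}_F$ by \eqref{dthetaf}. Consequently $\mathfrak{s}\mathsf{r}_{\Theta,k} = \mathsf{r}_{\mathfrak{s}\circ\Theta,k}$, and since post-composition by $\mathfrak{s}$ permutes $\textup{Inj}([t],[n])$ bijectively, the sum $\sum_\Theta\sum_k \mathsf{r}_{\Theta,k}^2$ is unchanged by $\mathfrak{s}$. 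Finally, I would apply Proposition \ref{sosrazvsgp} with $\mathsf{f}_l$ ranging over the collection $\{\mathsf{r}_{\Theta,k}\}_{\Theta,k}$ and $\{\bm{\lambda}_1,\ldots,\bm{\lambda}_k\}=\{\bm{\mu}\vdash n : \bm{\mu}\geq_{\textup{lex}}(n-t,1^t)\}$ to conclude that \eqref{middlesos} can be retrieved from the Gatermann-Parrilo SDP restricted to exactly these partitions, in the sense of Definition~\ref{def:restrictedGP}.

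The one point that needs care, and which I would treat as the main bookkeeping obstacle, is the passage from a fixed $\Theta$ (where Corollary~\ref{cor:orbitpolyvectorspace} applies) to the averaged expression $\mathbb{E}_\Theta$: one must check both that the isotypic containment is uniform in $\Theta$ (which it is, because $\geq_{\textup{lex}}(n-t,1^t)$ depends only on $t$), and that the resulting global sos is genuinely $\mathfrak{S}_n$-invariant so that Proposition~\ref{sosrazvsgp} applies. Everything else is a direct citation of earlier results.
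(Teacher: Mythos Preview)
Your proposal is correct and follows essentially the same approach as the paper's own proof: invoke Theorem~\ref{lcorbits} and Corollary~\ref{cor:orbitpolyvectorspace} to place every $\mathsf{r}_{\Theta,k}$ in $\bigoplus_{\bm{\mu}\geq_{\textup{lex}}(n-t,1^t)}V_{\bm{\mu}}$, observe that averaging over $\Theta$ makes the sos $\mathfrak{S}_n$-invariant, and then apply Proposition~\ref{sosrazvsgp}. Your version is slightly more explicit in verifying the $\mathfrak{S}_n$-invariance and in noting that the isotypic containment is uniform in $\Theta$, but the logical structure is identical.
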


\proof
First note that the sos $\sum_{\theta}\sum_{j} \mathsf{r}_{\theta,j}^2$ is invariant under $\mathfrak{S}_n$ since we are taking the expectation over all maps $\theta$. Moreover, $\mathsf{r}_{\theta,j}$ is invariant under $\mathfrak{R}_{\tau_{\bm{\lambda}}}$ for some tableau $\tau_{\bm{\lambda}}$ of shape $(n-k,1^k)$ for all $\theta$ and $j$. Indeed, even though different $\theta$'s will require different tableaux, all will be of shape $(n-k,1^k)$. Thus all $\mathsf{r}_{\theta,k}$ lie in $\bigoplus_{\bm{\mu} >_{\textup{lex}} (n-t,1^t)}V_{\bm{\mu}}$ by \eqref{cor:orbitpolyvectorspace}. Therefore, by \eqref{sosrazvsgp}, the sos \eqref{middlesos} can be obtained from the Gatermann-Parrilo SDP restricted to the partitions $\bm{\mu} \geq_{\textup{lex}} \bm{\lambda}$.
\qed

Finally, we tackle the general Razborov sos  for proofs involving several $\sigma$ and $l$.

\begin{corollary}\label{wholething}
Consider a conic combination of sos \eqref{razborovsosshape} for different choices of $\sigma$ and $l$. Let $k^*$ be the maximum size of all types $\sigma$ present.
Then this sos can be obtained through the Gatermann-Parrilo program restricted to partitions lexicographically greater or equal to the hook partition $(n-k^*, 1^{k^*})$.
\end{corollary}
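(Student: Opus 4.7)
The plan is to reduce Corollary \ref{wholething} to Theorem \ref{cor:sos from same isotypics} and Proposition \ref{sosrazvsgp} via a simple set-theoretic observation about the lexicographic order on hooks. First I would expand the sos \eqref{razborovsosshape} as a nonnegative combination $\sum_{T,f} \beta_{T,f}\, \mathbb{E}_\Theta {\mathbf{d}^{\Theta,T,f}}^{\!\top} Q_{T,f} \mathbf{d}^{\Theta,T,f}$. Since every $\beta_{T,f}$ is nonnegative and each $Q_{T,f}$ is psd, the whole expression is a sos, and each individual square in it can be written as $\sqrt{\beta_{T,f}}\,\mathsf{r}_{\Theta,k}$, where $\mathsf{r}_{\Theta,k}$ is one of the squared polynomials appearing in the $(T,f)$-piece \eqref{middlesos}. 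By the proof of Theorem \ref{cor:sos from same isotypics}, each such $\mathsf{r}_{\Theta,k}$ lies in $\bigoplus_{\bm{\mu}\geq_{\textup{lex}}(n-t,1^t)} V_{\bm{\mu}}$, where $t=|T|$.

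The key set-theoretic step is to observe that, when $t \leq t^*$, we have the inclusion
\begin{equation*}
\{\bm{\mu} \vdash n : \bm{\mu} \geq_{\textup{lex}} (n-t,1^t)\} \;\subseteq\; \{\bm{\mu} \vdash n : \bm{\mu} \geq_{\textup{lex}} (n-t^*,1^{t^*})\}.
\end{equation*}
Indeed, any $\bm{\mu}$ in the left-hand set has $\mu_1 \geq n-t \geq n-t^*$, and if $\mu_1 > n - t^*$ then the lex comparison with $(n-t^*,1^{t^*})$ is decided in the first coordinate, while if $\mu_1 = n-t^*$ then $t = t^*$ and the two sets coincide. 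Combined with the previous paragraph, this shows that every square $\sqrt{\beta_{T,f}}\,\mathsf{r}_{\Theta,k}$ in our expansion lies in the common subspace $\bigoplus_{\bm{\mu}\geq_{\textup{lex}}(n-t^*,1^{t^*})} V_{\bm{\mu}}$.

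Finally, the sos \eqref{razborovsosshape} is $\mathfrak{S}_n$-invariant: each inner piece is already invariant after averaging over $\Theta$, and a nonnegative linear combination of invariant polynomials is invariant. Thus we are in the hypothesis of Proposition \ref{sosrazvsgp} with $\{\bm{\lambda}_1,\dots,\bm{\lambda}_k\} = \{\bm{\mu} \vdash n : \bm{\mu} \geq_{\textup{lex}} (n-t^*,1^{t^*})\}$, and applying it immediately yields the conclusion, namely that the full Razborov sos is achievable by the Gatermann--Parrilo SDP restricted to this set of partitions. There is no real obstacle beyond bookkeeping; the only subtlety is the containment of the hook lex-intervals, which goes in the direction one might naively not expect (\emph{smaller} intersection type $t$ gives a \emph{smaller} set of admissible partitions), and this is precisely what makes the hook for $t^*$ the universal choice.
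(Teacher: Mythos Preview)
Your proposal is correct and follows essentially the same approach as the paper's proof. The paper argues more tersely by invoking Theorem~\ref{cor:sos from same isotypics} on each $(T,f)$-piece and then noting that a nonnegative combination of restricted-GP sos expressions is again of that form; you instead unpack the squares and reapply Proposition~\ref{sosrazvsgp} directly, and you spell out the lexicographic inclusion of hook intervals that the paper leaves implicit.
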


\proof
In Theorem \ref{cor:sos from same isotypics}, we proved that the sos
\eqref{deepsos} can be obtained by restricting the Gatermann-Parrilo SDP to partitions $\bm{\mu}$ for $\bm{\mu}\geq_{\textup{lex}} (n-|\sigma|,1^{|\sigma|})$. Thus every sos in \eqref{razborovsosshape} for each $\theta, \sigma, l$ can be obtained from partitions $\bm{\mu}$ for $\bm{\mu}\geq_{\textup{lex}} (n-k^*, 1^{k^*})$ where $k^*$ is the maximum size of all types considered. Since the final sos is a conic combination of these smaller sos, it can also be obtained by restricting the Gatermann-Parrilo SDP to those same partitions.
\qed

We have established the relationship between the Gatermann-Parrilo framework and the flag algebra sos for Tur{\'a}n problems.  We conclude with a few remarks that will be helpful in implementing the Gatermann-Parrilo framework.

\begin{remark}
Note that flag algebra Tur\'an sos uses fixed $k$ and $l$ (independent
of $n$) and thus the number of partitions indexing blocks in the
restricted SDP is also independent of $n$. Indeed, the number of
partitions lexicographically greater than or equal to $(n-k,1^{k})$ is
at most twice the number of partitions of $k$. Moreover, each small
subblock in a block corresponding to a $\bm{\lambda}$ has
size $m_{\bm{\lambda}}\times m_{\bm{\lambda}}$, which depends on $d$
but not $n$. Finally, even though the number of subblocks per block
(i.e., the number of standard tableaux) increases as $n$ increases, by
\eqref{onlyonetableau}, we only need one subblock. Thus, the size of
the restricted SDP does not depend on $n$.
\end{remark}

\begin{remark}
In Theorem~\ref{lcorbits}, we showed that a $\mathsf{r}_j$ in the sos
\eqref{deepsos} is invariant under
$\mathfrak{R}_{\tau_{\bm{\lambda}}}$ for some tableau
$\tau_{\bm{\lambda}}$ where $\bm{\lambda}$ is a hook partition. One
can obtain further savings in the size of the corresponding SDP by
using symmetries of the $\sigma$-flags which allows us to replace
the hook partition with partitions that are lexicographically larger
than it.  For example, in Example~\ref{exorbit_part2},
$\mathsf{p}_F^\theta$ is also invariant under the row group of
$$\tau_{\bm{\lambda}} = \young(45,23,1) \,\,\,\,\textup{, i.e., }\,\,\,\,
\mathfrak{R}_{\tau_{\bm{\lambda}}} = \{1, (2,3), (4,5), (2,3)(4,5)\}.$$
\end{remark}

We also note that we have stated our results based on the isotypic decomposition of $V = \RR[{\mathbf x}]_{\leq d}$ and not of the quotient vector space $\RR[{\mathbf x}]_{\leq d} / \mathscr{I}_n^A$. This is again for simplicity and will not change the flavor of the results. The ideal is used implicitly in some places such as in the derivation of the sos \eqref{razborovsosshape}.

\section{The Symmetry-Adapted SDP for Mantel's Theorem} \label{sec:Mantel1}
Recall the proof of Mantel's theorem from Section~\ref{sec:FlagAlgebras}  using flag algebra calculus (see also the Appendix). The sos was divided into two parts: one involving $F_0$ and $F_1$, and one involving $H_1$. We show how to retrieve the former; the latter is similar in flavor.
Since the flags $F_0$ and $F_1$ needed in the sos expression~\eqref{eq:mantel-short-sos} have a type of size $k=1$, Theorem~\ref{cor:sos from same isotypics} implies that we can retrieve this sos expression by restricting the Gatermann-Parrilo SDP to partitions $(n)$ and $(n-1,1)$. Moreover,
 $\mathsf{p}_{F}^{\theta}$ has degree at most one for $F\in \{F_0, F_1\}$ and therefore we only need to consider $V=\RR[{\mathbf{x}}]_{\leq 1}$. For illustration, we verify this.

  Observe that $n_{\bm{\lambda}}$, the number of standard tableaux of shape $\bm{\lambda}$, is $1$ for $\bm{\lambda}=(n)$ and $n-1$ for $\bm{\lambda}=(n-1,1)$. Restricting the expression in \eqref{onlyonetableau} to partitions $(n)$ and $(n-1,1)$ implies that there exist psd matrices $Q_{(n)}$ and $Q_{(n-1,1)}$ such that
\begin{align}\label{eq:sos-mantel-short2}
\mathsf{f} = 1 \cdot \langle  Q_{(n)}, Y_{(n)}\rangle + (n-1)\cdot \langle Q_{(n-1,1)}, Y_{(n-1,1)} \rangle
\end{align}

 \noindent where
 \begin{align*}
\mathsf{f}=E_{\theta}\left[\begin{pmatrix}\pp^{\theta}_{F_0}(G) & \pp^{\theta}_{F_1}(G)\end{pmatrix}
\begin{pmatrix} \frac12   &-\frac12 \\ -\frac12 &  \frac12\end{pmatrix} \begin{pmatrix}\pp^{\theta}_{F_0}(G) \\ \pp^{\theta}_{F_1}(G) \end{pmatrix}\right]
\end{align*}
is the sos expression obtained using flag algebras in \eqref{eq:mantel-short-sos}.
Recall that the expression~\eqref{eq:sos-mantel-short2} relies on the decomposition  $V_{\bm{\lambda}} = \oplus_{i=1}^{n_{\bm{\lambda}}} W_{\tau_{\bm{\lambda}}^i}$ as in Equation~\eqref{eq:alternate-decomposition}.  The matrices $Y_{\bm{\lambda}}$ come from the polynomials in the symmetry-adapted basis and can be computed using the algorithm in \cite[Chapter 5.2, pp 113-114]{FaesslerStiefelBook}. Also recall that $Y_{\bm{\lambda}}$ depends only on one standard tableau of shape $\bm{\lambda}$.

 The algorithm yields the basis polynomials
 \begin{align*}
\mathsf{p}_{0,0}:=1, \quad \mathsf{p}_{0,1}:= \frac{1}{\sqrt{\binom{n}{2}}}\cdot \sum_{1\leq i<j\leq n} \mathsf{x}_{ij}
\end{align*}
of degrees zero and one for $V_{(n)}=W_{\tau_{(n)}}$ where $\tau_{(n)}=\young(12<\ldots>n)$.
Therefore,
$$Y_{(n)}=\begin{pmatrix}
\mathsf{p}_{0,0} & \mathsf{p}_{0,1}
\end{pmatrix}
\begin{pmatrix}
\mathsf{p}_{0,0} \\
\mathsf{p}_{0,1}
\end{pmatrix}.$$

Similarly, the algorithm yields the basis polynomial

$$p_{1,n}:=\frac{1}{\sqrt{\binom{n-1}{2}\cdot 1^2+(n-1)\cdot \left(\frac{n-2}{2}\right)^2}}\left(\sum_{1\leq i < j \leq n-1}\mathsf{x}_{ij} - \frac{n-2}{2}\cdot\sum_{1\leq i \leq n-1} \mathsf{x}_{in}\right)$$
of degree one for $W_{\tau_{(n-1,1)}}$ where $\tau_{(n-1,1)}=\Yboxdimx{25pt}\Yboxdimy{10pt}\young(123<\ldots><n-1>,n).$
Thus, we have that
$$Y_{(n-1,1)}=\frac{1}{|\mathfrak{S}_n|} \sum_{\mathfrak{s}\in \mathfrak{S}_n} \mathfrak{s}(\mathsf{p}_{1,n})(\mathsf{p}_{1,n})^T=\frac{1}{n}\sum_{i=1}^n (\mathsf{p}_{1,i})^2.$$

 Altogether, we obtain a block SDP consisting of one block of size $2\times 2$ for $Q_{(n)}$ and another block of size $1\times 1$ for $Q_{(n-1,1)}$. For instance choosing psd matrices
$$Q_{(n)}=\begin{pmatrix}
\frac{(n-1)^2}{2} & \frac{-\sqrt{2(n-1)^3}}{\sqrt{n}} \\
\frac{-\sqrt{2(n-1)^3}}{\sqrt{n}}  & \frac{4(n-1)}{n}
\end{pmatrix}$$
 and

$$Q_{(n-1,1)}=\begin{pmatrix}
\frac{2(n-1)(n-2)}{n}
\end{pmatrix}$$
 yields the expression~\eqref{eq:mantel-short-sos} as desired.

\section{Conclusion}\label{sec:conclusion}
The main result of this paper is that standard symmetry-reduction methods in polynomial sos theory can retrieve
Razborov's sos proofs using flag densities that arise in the context of Tur\'an problems.  For the sake of notational simplicity, we presented our results in the context of graphs. However, the same techniques can be used for Tur\'an and non-Tur\'an problems over hypergraphs, digraphs, tournaments, etc. Below, we give a few examples of the changes that need to be made in different cases.

\begin{enumerate}
\item {\em Hypergraphs}: For any optimization problem over $a$-uniform hypergraphs on $n$ vertices, we can use the ideal

$$\langle  x_{i_1\ldots i_a}^2-x_{i_1\ldots i_a} \quad  \forall 1\leq i_1 < \ldots < i_a \leq n \rangle. $$

This ideal is $\mathfrak{S}_n$-invariant, which implies that our theory can still be applied. Thus any flag Cauchy-Schwarz proof in this setting can be retrieved by using symmetry-reduction and restricting to partitions lexicographically greater or equal to $(n-k^*, k^*)$ where $k^*$ is the size of the biggest $\sigma$ involved and by letting the degree be as big as the biggest flag present.

For example, we could use this for the Tur\'an hypergraph problem of maximizing the hyperedge density in an $a$-uniform hypergraph on $n$ vertices where $a$-uniform hypercliques of size $b$ are forbidden by adding 

$$\prod_{\{i_1, \ldots, i_a\} \in E(Q)} x_{i_1\ldots i_a} \quad  \forall b\textup{-clique } Q \textup{ in } K_n^a$$
to our ideal.  In \cite{Razborov10}, Razborov gives a Cauchy-Schwarz proof that the maximum edge density in 
a $3$-uniform hypergraph without $4$-cliques is at most $0.561666$. This result can be translated in the 
polynomial language using the above-mentioned framework.

\item {\em Digraphs}: For any optimization problem over digraphs on $n$ vertices, we can use the ideal

$$\langle x_{ij}^2-x_{ij} \quad  \forall 1\leq i,j \leq n \textup{ such that } i\neq j\rangle. $$
Again, the ideal is $\mathfrak{S}_n$-invariant, and so our techniques can again be used here.

For example, we could use this for the Cacetta-H\"aggkvist conjecture which states that every simple digraph of order $n$ with minimum outdegree of at least $r$ has a cycle of length at most $\lceil \frac{n}{r} \rceil$ by adding

$$\prod_{(i,j)\in E(C)} x_{ij}  \quad \forall \textup{ cycles } C \textup{ such that } |C|\leq \lceil\frac{n}{r} \rceil$$
$$ \prod_{j\in S} (1-x_{ij}) \quad \forall i, \forall S \subseteq [n]\backslash \{i\} \textup{ such that } |S|=n-r+1$$
and check for feasibility. Indeed, the first set of constraints ensures that we get a directed graph on $n$ vertices, the second set of constraints forbids cycles of length less or equal to $\lceil \frac{n}{r} \rceil$, and the third set of constraints sets the outdegree of every vertex to be t least $r$. Of course, here the degree of these constraints being quite high makes it unlikely that this technique would yield any interesting results unless $r$ is very big.

\item {\em Tournaments}:  For an optimization problem over all tournaments of size $n$, we can use the ideal

\begin{align*}
\langle & x^2_{ij}-x_{ij} && \forall 1 \leq i,j \leq n \textup{ such that } i\neq j\\
&x_{ij}x_{ji} && \forall 1 \leq i < j \leq n \\
&(1-x_{ij})(1-x_{ji}) && \forall 1 \leq i < j \leq n \rangle
\end{align*}

since exactly one of the arcs $(i,j)$ and $(j,i)$ are present for any $1 \leq i < j \leq n$, as well as any additional constraints forbidding certain structures in the problem. Again, we have an ideal that is $\mathfrak{S}_n$-invariant, and our techniques can again be used here.
\end{enumerate}

The general theory of flag algebras \cite{RazborovFlagAlgebras} is couched in a broad setting using tools from algebra, topology, and probability. It is a formal calculus that came about from an attempt to systematize and distill the many ad hoc methods in extremal combinatorics. In this paper, we have shown that sos proofs that arise from flag densities are equivalent to polynomial sos proofs using symmetry-reduction. This connection allows for systematic ways to search for flag sos proofs which was part if our original motivation in undertaking this project.  There are likely much further, and  deeper, connections between Razborov's theory in its full generality, and the fundamental structures of real algebraic geometry. We hope to delve deeper in this direction.

In \cite{RSST}, the authors, along with James Saunderson, established the converse to the result in this paper. By a $k$-subset hypercube we mean a hypercube whose coordinates are indexed by the $k$-element subsets of $[n]$. Thus the usual hypercube $\{0,1\}^n$ is the $1$-subset hypercube. The $2$-subset hypercube $\{0,1\}^{n \choose 2}$ arises in the context of optimization over (the edges of) a graph. The main result of \cite{RSST} is that flag methods can be used to provide sos certificates for the nonnegativity of symmetric polynomials over $k$-subset hypercubes. This extends their use beyond the realm of extremal combinatorics into general polynomial optimization. The two papers together establish that flag methods are equivalent to standard symmetry-reduction methods in polynomial sos theory over the class of  $k$-subset hypercubes.

\section{Appendix}
We present here the full translation of \cite{Falgas-RavryVaughan} to polynomials, culminating with a verification of the equivalence~\eqref{equivalencemantel}. We first discuss the (non-trivial) inequality whose nonnegativity establishes the theorem. A few other quantities are also needed for the proof in \cite{Falgas-RavryVaughan} which we define here.

Fix $m<n$, and consider $H \in \mathcal{G}_m$. Then we obtain the following equivalences:

\begin{align}
\mathsf{p}_n &\equiv \sum_{H \in \mathcal{G}_m} \mathsf{p}_H\mathsf{p}_m(\mathds{1}_H) \label{eq:subgraph density facts}\  \mod \mathscr{I}_n^{{K_3}},\\
1 &\equiv \sum_{H \in \mathcal{G}_m} \mathsf{p}_H \ \mod \mathscr{I}_n^{{K_3}}.\label{eq:subgraph density facts 2}
\end{align}
Both equivalences follow from the fact that two polynomials are equivalent mod $\mathscr{I}_n^{{K_3}}$ if and only if they have the same value on $\mathds{1}_G$  for each $G \in \mathcal{G}_n$. The first equivalence is since the
edge density of $G$ is the sum of the edge densities of $H \in \mathcal{G}_m$ weighted by the density of $H$ in $G$.
The second is since $\mathsf{p}_H(\mathds{1}_G)$ is the probability of $H$ in $G$.

Note that $\left(\mathsf{q}^\alpha_H\right)^2 \equiv \mathsf{q}^\alpha_H \mod \mathscr{I}_n^{K_3}$ since
$\mathsf{x}_{ij}$ and $(1-\mathsf{x}_{ij})$ are equivalent to their squares mod $\mathscr{I}_n^{K_3}$, which implies that
\begin{align}\label{sosdensity}
\mathsf{p}_H\equiv \frac{1}{a_H \binom{n}{|V(H)|}}\sum_{\alpha\in \textup{Inj}(V(H),[n])}\left(\mathsf{q}^\alpha_H\right)^2\    \mod \mathscr{I}_n^{K_3},
\end{align}
and thus $\mathsf{p}_H$ is a sos mod $\mathscr{I}_n^{K_3}$, a fact that will be useful later. Multiplying
$\max_{H\in \mathcal{G}_m} \mathsf{p}_m(\mathds{1}_H)$ by $1$ and then using equations~\eqref{eq:subgraph density facts} and ~\eqref{eq:subgraph density facts 2}, we get that

\begin{align} \label{eq:subgraph_density_inequality0}
\max_{H\in \mathcal{G}_m} \mathsf{p}_m(\mathds{1}_H) -\mathsf{p}_n
&\equiv  \sum_{H \in \mathcal{G}_m} \left( \underbrace{\max_{H'\in \mathcal{G}_m} \mathsf{p}_m(\mathds{1}_{H'})- \mathsf{p}_m(\mathds{1}_H)}_{\geq 0}\right) \mathsf{p}_H     \ \mod \mathscr{I}_n^{{K_3}}.
\end{align}
By \eqref{sosdensity}, the right-hand side of the above expression is a sos
polynomial mod $\mathscr{I}_n^{K_3}$ , and hence, $\max_{H\in \mathcal{G}_m} \mathsf{p}_m(\mathds{1}_H)$ is an upper bound on the edge density of any $G \in \mathcal{G}_n$. Typically, this bound does not give a sufficiently tight answer.

Now suppose we also had a sos polynomial mod $\mathscr{I}_n^{K_3}$ of the following type:
\begin{align} \label{eq:subgraph density inequality}
 \sum_{H \in \mathcal{G}_m}  c_H\mathsf{p}_H \equiv \sum_{i} \mathsf{s}_i^2 \ \mod \mathscr{I}_n^{\mathcal{K}_3}
\end{align}
where $c_H\in \mathbb{R}$ for each $H\in \mathcal{G}_m$.
Then adding inequality \eqref{eq:subgraph density inequality} to \eqref{eq:subgraph density facts}
and by a similar calculation to that in  \eqref{eq:subgraph_density_inequality0}, we get
\begin{align*}
&\max_{H \in \mathcal{G}_m} (\mathsf{p}_m(\mathds{1}_H) + c_H) -\mathsf{p}_n \equiv \\
 & \sum_{H \in \mathcal{G}_m} \left( \underbrace{\left(\max_{H'\in \mathcal{G}_m} \mathsf{p}_m(\mathds{1}_{H'})+c_{H'}\right)- \left(\mathsf{p}_m(\mathds{1}_H)+c_H\right)}_{\geq 0}\right) \mathsf{p}_H  +\sum_k \mathsf{s}_k^2 \mod \mathscr{I}_n^{{K_3}}.
\end{align*}
Again, since the right-hand side of the equivalence above is sos, we obtain the improved bound of
\begin{align}\label{eq:flag algebra bound}
\max_{H \in \mathcal{G}_m} (\mathsf{p}_m(\mathds{1}_H) + c_H)
\end{align}
on the edge density of any $G \in \mathcal{G}_n$. So the goal becomes to
find equivalences as in \eqref{eq:subgraph density inequality} which then feeds into \eqref{eq:flag algebra bound} to yield the desired bound of $\frac{1}{2}+O(\frac{1}{n})$. We now explain how this is done using flag algebras.

We must first define the density of two flags.
Fix a type $\sigma$ of size $k$ and $l\geq k$.  For a single flag
$F \in \mathcal{F}^\sigma_l$ and a fixed labeling $\theta$ of $k$ vertices in $G$ using all the labels in $[k]$, we have the density polynomial $\mathsf{p}^\theta_F$. Recall that $\mathsf{p}^\theta_F(\mathds{1}_G)$ is the probability that the $k$ vertices of $G$ labeled by $\theta$ along with $l-k$ unlabeled vertices picked uniformly at random induce a copy of $F$ in $G$. Now continue to select a disjoint set of $l'-k$ unlabeled vertices in $G$
uniformly at random. Then given a second $\sigma$-flag  $F' \in \mathcal{F}^\sigma_{l'}$, $\mathsf{p}_{F,F'}^\theta(\mathds{1}_G)$ is the probability that the two sets of unlabeled vertices each taken separately with
the graph induced by $\theta$ induce copies of $F$ and $F'$ respectively, where

\begin{align*}
\mathsf{p}^{\theta}_{F,F'} & = \frac{1}{\binom{n-k}{l-k}\binom{n-l}{l'-k}} \sum_{\substack{S,S'\subseteq[n]:\\|S|=l, |S'|=l'\\S\cap S'=\textup{im}(\theta)}}\left(\frac{1}{a_F^\sigma} \sum_{\alpha\in \textup{Inj}_\theta(V(F),S)} \mathsf{q}_F^\alpha \right)\left(\frac{1}{a_{F'}^\sigma} \sum_{\alpha\in \textup{Inj}_\theta(V(F'),S')} \mathsf{q}_{F'}^\alpha \right)\\
& = \frac{1}{a_F^\sigma a_{F'}^\sigma \binom{n-k}{l-k}\binom{n-l}{l'-k}} \sum_{\substack{\alpha_1 \in \textup{Inj}_\theta(V(F),[n])\\ \alpha_2 \in\textup{Inj}_{\theta}(V(F'),[n]\backslash \alpha_1(V(F\backslash \sigma)))}} \mathsf{q}_F^{\alpha_1} \mathsf{q}_{F'}^{\alpha_2}.
\end{align*}

Note that $\mathsf{p}^\theta_{F,F'}(\mathds{1}_G)=p(F_1,F_2;F_3)$ in \cite{RazborovFlagAlgebras} where $F_1=(F,\theta)\in \mathcal{F}_l^{\sigma}$, $F_2=(F',\theta)\in \mathcal{F}_{l'}^{\sigma}$ and $F_3=(G,\theta)\in \mathcal{F}_n^\sigma$ where $|\sigma|=k$.

Next, let $\mathsf{p}_F:\equiv \mathbb{E}_\theta \mathsf{p}^\theta_{F} \ \mod \mathscr{I}_n^{{K_3}}$  where $\mathsf{p}_{F}$ starts by choosing a map $\theta:[k]\rightarrow [n]$ uniformly at random and then computes $\mathsf{p}^\theta_{F}$ for it. Similary, let $\mathsf{p}_{F,F'} :\equiv \mathbb{E}_\theta \mathsf{p}^\theta_{F,F'} \ \mod \mathscr{I}_n^{{K_3}}$ where $\mathsf{p}_{F,F'}$ starts by choosing a map $\theta:[k]\rightarrow [n]$ uniformly at random and then computes $\mathsf{p}^\theta_{F,F'}$ for it.

If $k$, $l$ and $l'$ are fixed and $n$ is large, then picking two random extensions of
$\theta$ of sizes $l-k$ and $l'-k$ is essentially the same as picking two disjoint extensions. Indeed, the probability that the
two random extensions share  $d$ vertices is roughly $\frac{d}{n}$ where $d \leq \max\{l,l'\}-k$. Therefore, the probability of the two
random extensions overlapping is $O\left(\frac{1}{n}\right)$ where we hide factors depending on $k$ and $\max\{l,l'\}$.
This implies that for all partial labelings $\theta: [k]\rightarrow [n]$ there is a polynomial $\mathsf{err}^{\theta}_{F,F'}(\mathbf{x})$ such that
\begin{align} \label{eq:probabilities}
\mathsf{p}^\theta_F \mathsf{p}^\theta_{F'} \equiv \mathsf{p}^\theta_{F,F'} + \mathsf{err}^{\theta}_{F,F'}  \mod \mathscr{I}_n^{K_3}, \textrm{ and}\\
\mathsf{err}^{\theta}_{F,F'}\equiv  O\left(\frac1n\right)\mod \mathscr{I}_n^{K_3}.
\end{align}

Taking expectation with respect to the uniform distribution over all possible partial labelings $\theta$, and letting $\mathsf{err}_{F,F'}(\mathbf{x})=\mathbb{E}_{\theta}\,\mathsf{err}_{F,F'}^{\theta}(\mathbf{x})$, we obtain that
\begin{align} \label{eq:probabilities2}
\mathbb{E}_\theta \,\mathsf{p}^\theta_F \mathsf{p}^\theta_{F'} \equiv \mathsf{p}_{F,F'} +\mathsf{err}_{F,F'} \ \mod \mathscr{I}_n^{K_3}
\end{align}
where again $\mathsf{err}_{F,F'}\equiv  O\left(\frac1n\right)\mod \mathscr{I}_n^{K_3}$.

Going back to $m$-vertex graphs $H \in \mathcal{G}_m$, if $m \geq l+l'-k$, then by the same reasoning as in \eqref{eq:subgraph density facts}, we also have
\begin{align}\label{eq:flagsandH}
\mathsf{p}_{F,F'}\equiv  \sum_{H \in \mathcal{G}_m} \mathsf{p}_H \mathsf{p}_{F,F'}(\mathds{1}_H) \ \mod \mathscr{I}_n^{K_3}.
\end{align}
We can now use these observations to produce an expression of the form \eqref{eq:subgraph density inequality} as follows. For any psd matrix  $Q$ of size $|\mathcal{F}^\sigma_l| \times |\mathcal{F}^\sigma_l|$, the polynomial
$\sum_{F,F' \in \mathcal{F}^\sigma_l} Q_{F,F'} \mathsf{p}^\theta_F \mathsf{p}^\theta_{F'} $, and
hence, $\mathbb{E}_\theta \sum_{F,F' \in \mathcal{F}^\sigma_l} Q_{F,F'} \mathsf{p}^\theta_F \mathsf{p}^\theta_{F'}$, are
sos mod $\mathscr{I}_n^{K_3}$. Now note that
\begin{align*}
   \mathbb{E}_\theta \sum_{F,F' \in \mathcal{F}^\sigma_l} Q_{F,F'} \mathsf{p}^\theta_F \mathsf{p}^\theta_{F'}   & \equiv \sum_{F,F' \in \mathcal{F}^\sigma_l} Q_{F,F'} \left(\mathsf{p}_{F,F'}  + \mathsf{err}_{F,F'}\right) \mod \mathscr{I}_n^{K_3}\\
&\equiv \sum_{F,F' \in \mathcal{F}^\sigma_l} Q_{F,F'} \mathsf{p}_{F,F'}  + \mathsf{err}\ \mod \mathscr{I}_n^{K_3}\\
   & \equiv  \sum_{F,F' \in \mathcal{F}^\sigma_l} Q_{F,F'}  \left(\sum_{H \in \mathcal{G}_m} \mathsf{p}_H \mathsf{p}_{F,F'}(\mathds{1}_H) \right) + \mathsf{err} \ \mod \mathscr{I}_n^{K_3}\\
& \equiv  \sum_{H \in \mathcal{G}_m} \mathsf{p}_H \underbrace{\left(\sum_{F,F' \in \mathcal{F}^\sigma_l} Q_{F,F'} \mathsf{p}_{F,F'}(\mathds{1}_H) \right)}_{c_H} + \mathsf{err}\ \mod \mathscr{I}_n^{K_3}
\end{align*}
where $\mathsf{err} (\mathds{1}_G)$ has value $O\left(\frac1n\right)$ for every $G\in \mathcal{G}_n$, i.e., on the zeros of $\mathscr{I}_n^{K_3}$.
Therefore, just as we derived the bound on edge density following \eqref{eq:subgraph density inequality}, we get that
modulo $\mathscr{I}_n^{{K_3}}$,
\begin{align*}
&\max_{H \in \mathcal{G}_m} (\mathsf{p}_m(\mathds{1}_H) + c_H) -\mathsf{p}_n +\mathsf{err} \equiv \\
& \sum_{H \in \mathcal{G}_m} \left(\max_{H'\in \mathcal{G}_m} \left(\mathsf{p}_m(\mathds{1}_{H'})+c_{H'}\right)- \left(\mathsf{p}_m(\mathds{1}_H)+c_H\right)\right) \mathsf{p}_H  + \mathbb{E}_\theta \sum_{F,F' \in \mathcal{F}^\sigma_l} Q_{F,F'} \mathsf{p}^\theta_F \mathsf{p}^\theta_{F'}.
\end{align*}
Since the right-hand side is a sos expression for any graph $G\in \mathcal{G}_n$, we have $\mathsf{p}_n(\mathds{1}_G)\leq \mathsf{p}_m(\mathds{1}_H) + c_H+O\left(\frac1n\right)$ which is an upper bound on the edge density of $G$.

In summary, to apply the flag Cauchy-Schwarz approach to the Mantel problem, one chooses parameters $k,l$ and $m$ appropriately. Then the choice of the psd matrix $Q$ that obtains the best possible bound on the triangle-free density density as in \eqref{eq:flag algebra bound} reduces to solving a semidefinite program whose size depends on $k,l$ and $m$ and is independent of $n$. Note that in certain proofs, one must choose several $k,l,m$ and take a linear combination of \eqref{eq:subgraph density inequality} to obtain the desired upper bound; this however doesn't change the explanations above in a significant way.

We now illustrate the above procedure on the Mantel example.
Set $k = 1$ and take $\sigma$ to be a  single vertex labeled ``1''. Taking $l = 2$, we have the set of flags
$\mathcal{F}^\sigma_l = \{ F_0, F_1 \}$ where
$$ F_0 = \labeledvnonedge{}{1}, \,\,\,\, F_1 = \labeledvedge{}{1}.$$
Now we choose $m=3$ so that $\mathcal{H}$ is the set of all triangle-free graphs on three vertices. Up to isomorphism, there is a
unique graph on three vertices with $i$ edges for $i=0,1,2,3$. Call these $H_0, H_1$, $H_2$, and $H_3$, i.e.,
$$H_0=\Hzero, \quad H_1=\Hone, \quad H_2=\Htwo, \quad H_3=\Hthree.$$
Therefore, $\mathcal{G}_3 =\{H_0,H_1,H_2\}$ since $H_3$ is not triangle-free.

The density polynomials $\mathsf{p}_{H_i}$ for each $0\leq i\leq 3$ can be defined according to equation~\eqref{eq:dh}. For example, we have $\mathsf{p}_{H_0}= \frac{1}{{n \choose 3}}\sum_{i<j<k} (1-\mathsf{x}_{ij})(1-\mathsf{x}_{ik})(1-\mathsf{x}_{jk}).$

Using the fact that $\mathsf{p}_{H_3} \equiv 0 \textrm{ mod } \mathscr{I}_n^{K_3}$, we rewrite equations \eqref{eq:subgraph density facts} and \eqref{eq:subgraph density facts 2} as:
\begin{align*}
\mathsf{p}_n&\equiv \frac13 \mathsf{p_{H_1}}+ \frac23\mathsf{p_{H_2}} \textrm{ mod }
\mathscr{I}_n^{K_3},\\
1&\equiv  \mathsf{p_{H_0}}+ \mathsf{p_{H_1}}+ \mathsf{p_{H_2}}\textrm{ mod } \mathscr{I}_n^{K_3}.
\end{align*}
We now calculate $\mathsf{p}_{F_i,F_j}(\mathds{1}_{H_k})$ for every $i,j\in \{0,1\}$ and $k\in \{0,1,2,3\}$  and use \eqref{eq:flagsandH} to obtain the following relations

\begin{eqnarray*}
\pp_{F_0,F_0} &=& \pp_{H_0}+ \frac13 \pp_{H_1},\\
\pp_{F_0,F_1} =\pp_{F_1,F_0}  &=&  \frac 13 \pp_{H_1}+ \frac 13 \pp_{H_2},\\
\pp_{F_1,F_1} &=& \frac13 \pp_{H_2}+ \pp_{H_3} \equiv \frac13 \pp_{H_2} \textrm{ mod } \mathscr{I}_n^{K_3}.
\end{eqnarray*}
Choosing $Q= \begin{pmatrix} \frac12   &-\frac12 \\ -\frac12 &\frac12\end{pmatrix}$, we obtain
the following polynomial identity

\begin{align}\label{eq:mantel-short-sos}
& \frac12 \pp_{H_0}- \frac16 \pp_{H_1}- \frac16\pp_{H_2}+\mathsf{err}
\equiv\mathbb{E}_{\theta}\left[\begin{pmatrix}\mathsf{p}^{\theta}_{F_0} & \mathsf{p}^{\theta}_{F_1}\end{pmatrix}  \begin{pmatrix} \frac12   &-\frac12 \\ -\frac12 &\frac12\end{pmatrix} \begin{pmatrix}\mathsf{p}^{\theta}_{F_0} \\ \mathsf{p}^{\theta}_{F_1} \end{pmatrix}\right]
\end{align}
as in the sos expression ~\eqref{eq:subgraph density inequality}.  Thus we get the following proof:

\begin{align*}
&\frac12-\frac{1}{{n \choose 2}}\sum_{1\leq i< j\leq n} \mathsf{x}_{ij} + \mathsf{err}   \equiv \frac12 \pp_{H_0}- \frac16 \pp_{H_1}- \frac16\pp_{H_2}+\frac13 \pp_{H_1}+ \mathsf{err}  \textrm{ mod } \mathscr{I}_n^{K_3}\\
&\quad \quad \equiv\mathbb{E}_{\theta}\left[\begin{pmatrix}\mathsf{p}^{\theta}_{F_0} & \mathsf{p}^{\theta}_{F_1}\end{pmatrix}  \begin{pmatrix} \frac12   &-\frac12 \\ -\frac12 &\frac12\end{pmatrix} \begin{pmatrix}\mathsf{p}^{\theta}_{F_0} \\ \mathsf{p}^{\theta}_{F_1} \end{pmatrix}\right] +\frac13 \pp_{H_1}\textrm{ mod } \mathscr{I}_n^{K_3}.
\end{align*}

\bibliographystyle{alpha}

\end{document}